\theoremstyle{plain}
\newtheorem{thm}{Theorem}[section]
\newtheorem{prop}{Proposition}[section]
\newtheorem{lem}{Lemma}[section]
\newtheorem{cor}{Corollary}[section]
\theoremstyle{definition}
\newtheorem{rem}{Remark}[section]
\newcommand{\Q}{\mathbb{Q}}
\newcommand{\R}{\mathbb{R}}
\newcommand{\OOO}{\mathcal{O}}
\newcommand{\oo}{\mathfrak{o}}
\newcommand{\QQ}{\mathbb{Q}}
\newcommand{\NN}{\mathbb{N}}
\newcommand{\ZZ}{\mathbb{Z}}
\newcommand{\RR}{\mathbb{R}}
\newcommand{\CC}{\mathbb{C}}
\newcommand{\Cl}{\mathop{\mathrm{Cl}}\nolimits}
\newcommand{\e}{\hfill $\Box$}
\begin{document}
%\pagewiselinenumbers 
%\linenumbers 
\title{An elementary approach to toy models for \\D. H. Lehmer's conjecture}
%Vladimir Aleksandrovich Yudin
\author{
Eiichi Bannai\thanks{Graduate School of Mathematics Kyushu University, 
Motooka 744 Nishi-ku, Fukuoka, 819--0395 Japan. 
email: bannai@math.kyushu-u.ac.jp}, 
Tsuyoshi Miezaki\thanks{Department of Mathematics, Hokkaido University, 
Kita 10 Nishi 8 Kita-Ku, Sapporo, Hokkaido, 060--0810 Japan, 
e-mail: miezaki@math.sci.hokudai.ac.jp} and 
Vladimir A. Yudin\thanks{Moscow Power Engineering Institute
(Technical University), 
I05835 Moscow ,Russia, e-mail: vlayudin@mtu-net.ru. 
%This work was partially supported by the Russian Foundation 
%for Basic Research (project 08-01-00501).
}
}

\maketitle

%\author{Eiichi Bannai and Tsuyoshi Miezaki}

%\maketitle \vspace{-0.2in}
%\begin{center}
%Graduate School of Mathematics Kyushu University\\
%Hakozaki 6-10-1 Higashi-ku, Fukuoka, 812-8581 Japan\\ \quad
%\end{center} \vspace{0.1in}

\begin{quote}
{\small\bfseries Abstract.}
In 1947, Lehmer conjectured that the Ramanujan's tau function $\tau (m)$ 
never vanishes for all positive integers $m$, 
where $\tau (m)$ is the $m$-th Fourier coefficient of the cusp form 
$\Delta _{24}$ of weight $12$. 
The theory of spherical $t$-design is closely related to 
Lehmer's conjecture because 
it is shown, by Venkov, de la Harpe, and Pache, that 
$\tau (m)=0$ is equivalent to the fact that 
the shell of norm $2m$ of the $E_{8}$-lattice is a spherical $8$-design. 
So, Lehmer's conjecture is reformulated in terms of spherical $t$-design. 

Lehmer's conjecture is difficult to prove, and still remains open. 
However, Bannai-Miezaki showed that 
none of the nonempty shells of the integer 
lattice $\ZZ^2$ in $\RR^2$ is a spherical $4$-design, 
and that none of the nonempty shells of the hexagonal lattice 
$A_2$ is a spherical $6$-design. Moreover, 
none of the nonempty shells of the integer lattices associated to the 
algebraic integers of 
imaginary quadratic fields whose class number is either $1$ or $2$, 
except for $\QQ(\sqrt{-1})$ and $\QQ(\sqrt{-3})$ 
is a spherical $2$-design. 
In the proof, the theory of modular forms played an important role. 

Recently, Yudin found an elementary proof for the case of $\ZZ^{2}$-lattice 
which does not use the theory of modular forms but 
uses the recent results of Calcut. 
In this paper, we give the elementary (i.e., modular form free) proof 
and discuss the relation 
between Calcut's results and the theory of imaginary quadratic fields. 

\noindent
{\small\bfseries Key Words and Phrases.}
theta series, spherical $t$-design, lattices.\\ \vspace{-0.15in}

\noindent
2000 {\it Mathematics Subject Classification}. Primary 11F03; Secondary 05B30; 
Tertiary 11R04.\\ \quad
\end{quote}

\section{Introduction}
It was shown by Bannai-Miezaki \cite{Toy-BM} that 
none of the nonempty shells of the integer 
lattice $\ZZ^2$ in $\RR^2$ is a spherical $4$-design, 
and that none of the nonempty shells of the hexagonal lattice 
$A_2$ is a spherical $6$-design. 
We called these results as toy models for D. H. Lehmer's conjecture, 
because the original Lehmer's conjecture 
that the value of the Ramanujan's tau function $\tau(m)$ 
is never zero for any positive integer $m$ is equivalent to the 
statement that no shell of the $E_8$-lattice 
(Korkine-Zolotareff lattice) is a spherical $8$-design, 
as it was observed by Venkov, de la Harpe, and Pache 
(cf.~{\cite{{HP}, {HPV}, {Pache}, {Venkov1}}). 
In \cite{Toy-BM} and in the subsequent \cite{Toy-BM2}, 
where further toy models (of Lehmer's conjecture) 
were obtained for the lattices associated to the algebraic 
integers of imaginary quadratic number fields whose class number 
is either $1$ or $2$, as well as in the work on Venkov, 
de la Harpe and Pache, 
the theory of modular forms played an important role. 
The third author (Yudin) seeked and then found an elementary 
proof (for the case of $\ZZ^2$-lattice) 
which does not use the theory of 
modular forms, just by using the language of Gaussian integers $\ZZ[\sqrt{-1}]$. 
In that proof, the recent results of Calcut \cite{Calcut} 
for Gaussian integers $\ZZ[\sqrt{-1}]$ was used, 
instead of modular forms, in some crucial ways. 
This paper describes an elementary approach, and 
the subsequent discussions among the three authors on this and related topics.
The main points of this paper are as follows. 
\begin{enumerate}
\item 
[(i)]
First we give an elementary proof for the $\ZZ^2$-lattice 
using the results of Calcut. 
\item 
[(ii)] 
We remark that the results of Calcut is essentially equivalent to 
the multiplicative property 
of the numbers of nonequivalent integral ideals of a 
certain imaginary quadratic number field, 
%of the Fourier coefficients of a certain 
%weighted theta series associated with the lattice, 
which is well known and is also directly proved 
in an elementary way. 
\item 
[(iii)] 
By using these elementary (i.e., modular form free) approach, 
we give an alternative proof for the lattice associated to 
the algebraic integers of 
any imaginary quadratic number field of class number is $1$. 
(Here we remark that we can also avoid the use of the results 
of Calcut \cite{Calcut}.) 
\item 
[(iv)] 
We formulate and prove generalizations of the results of 
Calcut \cite{Calcut} for $\ZZ[\sqrt{-1}]$ to 
the imaginary quadratic number fields whose class number is $1$. 
\end{enumerate} 
So, we are able to obtain the toy models for 
the lattice of the algebraic integers of 
imaginary quadratic fields with class number $1$ 
by an elementary approach, 
in a sense that it is modular form free and 
also free from the results of Calcut \cite{Calcut}. 

This paper is organized as follows. 
In Section \ref{section:pre}, we review 
the concept of spherical designs and 
the theory of imaginary quadratic fields and 
quote the results of Calcut \cite{Calcut}. 
In Section \ref{section:non}, we study the 
nonexistence of the spherical designs in the 
shells of lattices. 
In Section \ref{section:non_Z^2_1} and \ref{section:non_Z^2_2}, 
we show that 
\begin{thm}\label{thm:Z^2}
The shells in $\ZZ^{2}$-lattice are not spherical $4$-designs. 
\end{thm}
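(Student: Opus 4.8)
The plan is to translate the design condition into an arithmetic statement about Gaussian integers and then settle it by multiplicativity. Identify $\RR^{2}$ with $\CC$, so that the shell of norm $m$ is $X_{m}=\{z\in\ZZ[i]:|z|^{2}=m\}$, and, after rescaling to the unit circle, $X_{m}$ is a spherical $t$-design if and only if $S_{k}(m):=\sum_{z\in X_{m}}z^{k}=0$ for $k=1,\dots,t$ (the degree-$k$ spherical harmonics on $S^{1}$ being spanned by $z^{k}$ and $\bar z^{k}$). Since $X_{m}$ is stable under $z\mapsto iz$, one gets $S_{k}(m)=i^{k}S_{k}(m)$, so $S_{k}(m)=0$ whenever $4\nmid k$; hence every nonempty shell is automatically a spherical $3$-design, and Theorem~\ref{thm:Z^2} reduces to showing that
$$f(m):=\sum_{z\in\ZZ[i],\ |z|^{2}=m}z^{4}\neq0$$
for every $m$ with $X_{m}\neq\emptyset$, i.e.\ for every $m$ in which each rational prime $\equiv3\pmod 4$ occurs to an even power.

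Next I would prove that $\tilde f:=f/4$ is multiplicative. If $\gcd(m,n)=1$ and both are norms from $\ZZ[i]$, unique factorization shows that each $z$ with $|z|^{2}=mn$ factors as $z=z_{1}z_{2}$ with $|z_{1}|^{2}=m$, $|z_{2}|^{2}=n$, uniquely up to replacing $(z_{1},z_{2})$ by $(uz_{1},u^{-1}z_{2})$ for a unit $u\in\{\pm1,\pm i\}$; since $u^{4}=1$ the product $z_{1}^{4}z_{2}^{4}$ is unchanged, whence $f(m)f(n)=4f(mn)$, and as $f(1)=4$ the function $\tilde f$ is multiplicative. (This step is precisely the place where Calcut's results—or, as noted in the introduction, the equivalent multiplicativity of ideal counts—can be invoked, but the argument just sketched is already elementary.) It then remains to evaluate $\tilde f$ on prime powers, which a direct computation using the ramified prime $2=-i(1+i)^{2}$, the inert primes $q\equiv3\pmod4$, and the split primes $p=\pi\bar\pi$ with $p\equiv1\pmod4$ yields:
$$\tilde f(2^{a})=(-4)^{a},\qquad \tilde f(q^{2d})=q^{4d},\qquad \tilde f(p^{b})=\sum_{j=0}^{b}\pi^{4j}\bar\pi^{4(b-j)},$$
with $\tilde f$ vanishing on odd powers of an inert prime (harmless, since then $X_{m}=\emptyset$).

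The crux is to see that the split-prime factors never vanish. Writing $\alpha=\pi^{4}$, we have $\tilde f(p^{b})=\bar\alpha^{\,b}\sum_{j=0}^{b}(\alpha/\bar\alpha)^{j}$, which equals $(b+1)\alpha^{b}\neq0$ if $\alpha=\bar\alpha$ and otherwise vanishes exactly when $(\alpha/\bar\alpha)^{b+1}=1$; in either case a vanishing $\tilde f(p^{b})$ would force $(\pi/\bar\pi)^{4(b+1)}=1$, i.e.\ $\pi/\bar\pi$ would be a root of unity. But $\pi/\bar\pi\in\QQ(i)$, whose only roots of unity are $\pm1,\pm i$, and writing $\pi=a+bi$ gives $\pi/\bar\pi=(a^{2}-b^{2}+2abi)/p$ with $ab\neq0$ and $a^{2}\neq b^{2}$ (as $p$ is an odd prime), so $\pi/\bar\pi\notin\{\pm1,\pm i\}$—a contradiction. (This is the arithmetic content of Niven's theorem, equivalently Calcut's statement that $\tfrac1\pi\arg(a+bi)$ is irrational here; the elementary argument above avoids quoting it, as announced in point~(iii) of the introduction.) Since for every nonempty shell $\tilde f(m)$ is a product of nonzero factors of these three types, $\tilde f(m)\neq0$, hence $f(m)\neq0$, proving the theorem. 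The only delicate point is this non-vanishing of the split-prime factor—everything else is bookkeeping with unique factorization in $\ZZ[i]$.
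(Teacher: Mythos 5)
Your proof is correct, and while it shares the paper's overall strategy (reduce to the fourth power sum, factor it over the split primes, and show each local factor is nonzero), the implementation differs at both key steps. For multiplicativity, you prove directly that the twisted sum $f(m)=\sum_{|z|^2=m}z^4$ satisfies $f(m)f(n)=4f(mn)$ by unique factorization in $\ZZ[i]$ and the fact that the four units are killed by the fourth power; the paper instead establishes multiplicativity of the representation numbers (equivalently of ideal counts, Proposition 2.2, or via Calcut's lemma) and then deduces the product formula for the normalized averages $I_{(\ZZ^2)_m}(4)=\prod_k I_{(\ZZ^2)_m,p_k}(4)$ -- your version packages the counting identity and the product formula in one stroke, and also absorbs the power of $2$ and the inert primes into the same computation, where the paper disposes of $m'=2^c m$ by a separate rotation argument. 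For the crux (non-vanishing of the split-prime factor, which is the same geometric sum as the paper's $\sin(4(1+a_k)\varphi_k)/((1+a_k)\sin 4\varphi_k)$), the paper argues either via Calcut's rational-tangent theorem or via the trick that $2\cos(4\varphi_k)$ would be a rational algebraic integer, forcing $z\in\{\pm1,\pm2\}$ and a case analysis on special angles; you instead observe that vanishing would make $\pi/\bar\pi$ a root of unity in $\QQ(i)$, hence one of $\pm1,\pm i$, which is impossible since $\pi^2/p$ has nonzero real and imaginary parts. Your root-of-unity argument is shorter and fully self-contained -- in effect it proves exactly the instance of Calcut's Lemma 2.1 (Niven's theorem) that is needed, consonant with the paper's own remark in Section 4.2 that Calcut's result is equivalent to multiplicativity; what the paper's trigonometric formulation buys in exchange is a template (the $2\cos$ algebraic-integer case analysis) that it reuses verbatim for $A_2$ and the other class number one fields, where the relevant exponent is $6$ or $2$ rather than $4$.
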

In Section \ref{subsection:mul}, 
we show that the results of Calcut is essentially equivalent to 
the multiplicative property 
of the numbers of nonequivalent integral ideals of a 
certain imaginary quadratic number field. 
In Section \ref{section:non_A_2}, we show that 
\begin{thm}\label{thm:A_2}
The shells in $A_2$-lattice are not spherical $6$-designs. 
\end{thm}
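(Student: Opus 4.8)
The strategy mirrors the proof of Theorem~\ref{thm:Z^2}, now carried out inside the ring of Eisenstein integers. I would begin by identifying the hexagonal lattice $A_{2}$ with $\ZZ[\omega]$, where $\omega=e^{2\pi i/3}$ generates the ring of integers of the class-number-one field $\QQ(\sqrt{-3})$, and recall (as set up in Section~\ref{section:non}) that a finite subset $X$ of a circle $|z|=r$ in $\RR^{2}\cong\CC$ is a spherical $t$-design exactly when $\sum_{z\in X}z^{j}=0$ for $j=1,\dots,t$. Every nonempty shell $X$ of $A_{2}$ is stable under multiplication by the unit $\zeta=-\omega^{2}=e^{i\pi/3}$, a primitive sixth root of unity, so the identity $(1-\zeta^{j})\sum_{z\in X}z^{j}=0$ forces $\sum_{z\in X}z^{j}=0$ automatically for $1\le j\le 5$. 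Hence $X$ is a spherical $6$-design if and only if $\sum_{z\in X}z^{6}=0$, and the theorem reduces to proving that this sixth moment never vanishes on a nonempty shell.

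Let $X$ be the shell of norm $m$, so $|z|^{2}=m$ for all $z\in X$. Grouping the elements of $X$ into orbits under the six units and using $u^{6}=1$ for every unit $u$, the orbit of a generator $\alpha$ of an integral ideal $\aaa$ with $N(\aaa)=m$ contributes $\sum_{u}(u\alpha)^{6}=6\alpha^{6}$, which is independent of the choice of generator. Writing $g(m)=\sum_{N(\aaa)=m}\alpha_{\aaa}^{6}$ (sum over integral ideals of norm $m$), we obtain $\sum_{z\in X}z^{6}=6\,g(m)$, and unique factorization of ideals in $\ZZ[\omega]$ shows that $g$ is multiplicative with $g(1)=1$, so $g(m)=\prod_{p\mid m}g(p^{v_{p}(m)})$. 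It therefore suffices to prove that each local factor $g(p^{v_{p}(m)})$ is a nonzero complex number; a finite product of nonzero complex numbers being nonzero, this gives $\sum_{z\in X}z^{6}=6\,g(m)\ne 0$.

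For the local factors I would split into three cases. If $p=3$ is ramified, the unique ideal of norm $3^{k}$ is $(1-\omega)^{k}$, so $g(3^{k})=(1-\omega)^{6k}\ne 0$. If $p\equiv 2\pmod 3$ is inert, an ideal of norm $p^{k}$ exists only for even $k=2\ell$ (precisely the condition for the relevant part of $X$ to be nonempty), and then $g(p^{2\ell})=p^{6\ell}\ne 0$. The essential case is $p\equiv 1\pmod 3$, where $(p)=\pp\,\overline{\pp}$ with $\pp\ne\overline{\pp}$. Here one needs that $\varpi/\overline{\varpi}$ is not a root of unity, where $\varpi$ is a generator of $\pp$ with $\varpi\,\overline{\varpi}=p$: this is the $A_{2}$-analogue of Calcut's theorem (that the argument of a nonzero Eisenstein integer is an irrational multiple of $\pi$ unless the integer is a unit times a rational integer or a unit times $\sqrt{-3}$), and it also follows directly from unique factorization, since $(\varpi/\overline{\varpi})^{n}=1$ would give $\varpi^{2n}=(\varpi\,\overline{\varpi})^{n}=p^{n}$, hence the ideal identity $\pp^{2n}=\pp^{n}\overline{\pp}^{\,n}$, hence $\pp^{n}=\overline{\pp}^{\,n}$, contradicting $\pp\ne\overline{\pp}$. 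The ideals of norm $p^{k}$ being $\pp^{j}\overline{\pp}^{\,k-j}$ for $0\le j\le k$, we then compute
\[
g(p^{k})=\sum_{j=0}^{k}\varpi^{6j}\,\overline{\varpi}^{\,6(k-j)}
=\overline{\varpi}^{\,6k}\,\frac{(\varpi/\overline{\varpi})^{6(k+1)}-1}{(\varpi/\overline{\varpi})^{6}-1}\neq 0,
\]
the denominator being nonzero and the numerator being nonzero by the preceding remark.

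The main obstacle is precisely this non-vanishing step for split primes; everything else parallels the $\ZZ^{2}$ case and is essentially bookkeeping. Care is needed to check that the reduction to a single moment uses the sixth-root-of-unity symmetry correctly, that the constant $6$ is carried along without disturbing multiplicativity, and that the ``empty'' subcases at inert primes are consistent with the hypothesis that $X$ is nonempty. Assembling the pieces yields $\sum_{z\in X}z^{6}=6\prod_{p\mid m}g(p^{v_{p}(m)})\ne 0$ for every nonempty shell $X$ of $A_{2}$, so no such shell is a spherical $6$-design.
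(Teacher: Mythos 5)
Your proof is correct, and it reaches the paper's conclusion by a route that differs at the decisive step. The skeleton is shared: both arguments reduce to the sixth moment, factor it into local pieces indexed by the rational primes dividing the norm (your normalized geometric sum $\overline{\varpi}^{6k}\sum_{j}r^{j}$ with $r=(\varpi/\overline{\varpi})^{6}$ is exactly the ratio of sines (\ref{eqn:Lehmer_A_2}) obtained from (\ref{eqn:cos})), and then show every local factor is nonzero. What you do differently: you get multiplicativity directly by decomposing the shell into unit orbits indexed by integral ideals and invoking unique ideal factorization, whereas Section \ref{section:non_A_2} routes this through the multiplicative property of the theta coefficients (Proposition \ref{thm:mul}) and the counting formula (\ref{eqn:key_A_2}); more importantly, your nonvanishing at split primes rests on the fact that $\varpi/\overline{\varpi}$ is not a root of unity, proved in one line from $\pp^{n}=\overline{\pp}^{\,n}\Rightarrow\pp=\overline{\pp}$. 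That statement is precisely the generalized Calcut theorem of the paper (Theorem \ref{thm:Calcut}, proved in Section \ref{subsection:calcut} by the same ideal-theoretic argument), which Section \ref{section:non_A_2} deliberately avoids: there the paper instead uses a Niven-type argument --- $2\cos(6\varphi_{k})$ is a rational algebraic integer, hence lies in $\{0,\pm 1,\pm 2\}$, and each case is excluded by explicit computations with $a$, $b$ and the congruence $p_{k}\equiv 1\pmod 3$ --- followed by a separate rotation argument for the ramified factor $3^{c}$. Your route buys brevity and uniformity (the ramified and inert primes are absorbed into the product, and the identical template works for every class-number-one field, which is how the paper's Section \ref{section:non_Z^2_1} treats $\ZZ^{2}$ via Lemma \ref{lem:Calcut_Z^2}); the paper's route in Section \ref{section:non_A_2} buys independence from any Calcut-type lemma, which is its stated purpose, at the price of the longer case analysis of special angles. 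Your preliminary observation that the moments of orders $1$ through $5$ vanish by the six-fold unit symmetry is correct but not needed for the nonexistence direction.
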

In Section \ref{section:non_gen}, we show that 
\begin{thm}\label{thm:gen}
Let $K=\Q(\sqrt{-d})$ be an imaginary quadratic field 
whose class number is $1$ and $d\neq 1$, $3$ 
i.e., $d$ is one of the following numbers 
$2$, $7$, $11$, $19$, $43$, $67$, $163$. 
Then, the shells in the lattice associated to $K$ 
are not spherical $2$-designs. 
\end{thm}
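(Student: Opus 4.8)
The plan is to reduce the claim about spherical $2$-designs to a statement about the theta series with harmonic coefficients, and then to the behavior of certain character sums attached to the imaginary quadratic field $K=\Q(\sqrt{-d})$. Recall that a nonempty shell $X_m$ of norm $m$ in the lattice $\OOO_K$ is a spherical $2$-design if and only if, for every harmonic polynomial $P$ of degree $1$ and degree $2$ on $\RR^2$, one has $\sum_{x\in X_m}P(x)=0$. The degree-$1$ condition is automatic from the central symmetry $x\mapsto -x$ of the lattice, so the content is entirely in the two-dimensional space of degree-$2$ harmonics, spanned (after identifying $\RR^2$ with $\C$) by the real and imaginary parts of $z\mapsto z^2$. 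Thus the shell $X_m$ fails to be a spherical $2$-design precisely when $\sum_{z\in\OOO_K,\ N(z)=m} z^2 \neq 0$, where $N$ is the norm form of $K$. So Theorem~\ref{thm:gen} is equivalent to the assertion that for every $m$ for which the shell is nonempty, this sum is nonzero.

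Next I would package these sums into a generating function. Following the approach used for $\ZZ^2$ and $A_2$ in the earlier sections, I would introduce the weighted theta series
\[
\Theta_{\OOO_K,2}(q) \;=\; \sum_{z\in\OOO_K} z^2\, q^{N(z)} \;=\; \sum_{m\ge 0}\Bigl(\sum_{N(z)=m} z^2\Bigr) q^m,
\]
and the goal becomes: no coefficient of $\Theta_{\OOO_K,2}$ with nonempty shell vanishes. Because the class number of $K$ is $1$, the ring $\OOO_K$ is a PID, so ideals are principal and the arithmetic of elements is governed by unique factorization into primes. The key structural input I would invoke here is precisely the generalization of Calcut's results proved in point (iv) of the introduction, namely a multiplicativity statement for $K$: writing elements up to units, the relevant counting/summing function over elements of a given norm factors over the prime factorization of $m$. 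Using this, $\Theta_{\OOO_K,2}$ should factor as an Euler product over rational primes $p$, with local factors determined by whether $p$ splits, is inert, or ramifies in $K$. For inert $p$ only even powers of $p$ occur and those contribute a manifestly nonzero (in fact the "diagonal") factor; for ramified $p$ there is essentially one prime above $p$; the interesting case is split $p=\pp\bar\pp$, where the local factor is a sum over $\pi^a\bar\pi^b$ with $a+b$ fixed.

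The heart of the argument, and the step I expect to be the main obstacle, is showing that each split-prime local factor is nonzero and, more importantly, that the product of all of them — i.e. the coefficient of $q^m$ — cannot vanish. Unlike the $\ZZ^2$ case, where one can exploit the units $\{\pm1,\pm i\}$ and Calcut's explicit identities, for general $d$ there are only the trivial units $\pm1$, which is actually an advantage: there is less cancellation to control. I would argue that the local factor at a split prime $p$, being (up to a unit and a power of $p$) of the form $\sum_{a+b=k}\pi^{2a}\bar\pi^{2b}$ for a suitable generator $\pi$, is a nonzero element of $\OOO_K$ because it is a nonzero polynomial in $\pi/\bar\pi$ (an algebraic number on the unit circle that is not a root of unity for $d\ne 1,3$, which is exactly where the hypotheses $d\ne1,3$ and the finiteness of the unit group enter). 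Then, to conclude that the full product over primes dividing $m$ is nonzero, I would use that $\OOO_K$ is an integral domain: a product of nonzero elements is nonzero. The delicate point requiring care is the bookkeeping of units and of the powers of $p$ coming from inert primes, and making sure the harmonic $z^2$ really does descend to a well-defined function on ideals after choosing generators; I would handle this by fixing, once and for all, a set of generators and tracking the $\pm1$ ambiguity, which squares away in $z\mapsto z^2$. Finally I would note, as promised in point (iii), that for the split local factors one need not invoke Calcut at all: the multiplicativity follows directly from unique factorization in the PID $\OOO_K$, giving a fully elementary, modular-form-free proof.
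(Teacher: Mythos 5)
Your proposal is correct in substance and shares the same skeleton as the paper's argument: reduce the $2$-design property of a shell to the vanishing of $\sum_{N(z)=m}z^{2}$ (the paper's $I_{L_m}(2)$), use class number $1$/multiplicativity to factor this sum over the split primes dividing $m$ (inert and ramified primes contribute only a nonzero scalar, respectively a rotation by a multiple of $\pi$), and then show each split local factor $\sum_{s=0}^{a}\pi^{2s}\bar\pi^{2(a-s)}$ is nonzero. Where you genuinely diverge is in that last, crucial step. The paper writes the local factor as $\sin(2(1+a_k)\varphi_k)/\bigl((1+a_k)\sin(2\varphi_k)\bigr)$ and excludes vanishing by a rationality/integrality case analysis: vanishing forces $\varphi_k$ to be a rational multiple of $\pi$, so $2\cos(2\varphi_k)$ is both an algebraic integer and rational (since $\cos\varphi_k=\mathrm{Re}(\pi)/\sqrt{p_k}$), hence lies in $\{0,\pm1,\pm2\}$, and each case is eliminated by hand; this keeps the proof free of Calcut-type input, as promised in (iii) of the introduction. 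You instead note that the local factor equals $\bar\pi^{2a}(1+\lambda+\cdots+\lambda^{a})$ with $\lambda=(\pi/\bar\pi)^{2}$, which can vanish only if $\lambda$ is a nontrivial root of unity, and you forbid that by appealing to the generalized Calcut statement (the paper's Theorem 4.1); this is shorter and avoids all trigonometry, at the price of leaning on Section 4. Two points to tighten before your plan closes: first, the needed fact is not that $\pi/\bar\pi$ is ``an algebraic number on the unit circle'' (that alone proves nothing); what does the work is the one-line ideal argument that $(\pi/\bar\pi)^{n}=1$ would give $(\pi)^{n}=(\bar\pi)^{n}$ and hence, by unique factorization of prime ideals, $(\pi)=(\bar\pi)$, contradicting that $p$ splits — this also rules out $\lambda=1$, and it holds for every $d$, not only $d\neq1,3$; second, the hypothesis $d\neq1,3$ actually enters through the unit group being $\{\pm1\}$, so the unit contribution $\sum_{u}u^{2}=2\neq0$, whereas for $d=1,3$ this sum vanishes and the shells genuinely are $2$-designs. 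With these clarifications your argument is a complete and, if anything, cleaner variant of the paper's proof.
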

In Section \ref{subsection:calcut}, 
we study the generalization of Calcut's results. 
%Finally, in Section \ref{section:rem}, we state some conjectures. 

%In this paper, we reprove the same statements 
%Theorem \ref{thm:toy} and \ref{thm:toy2} 
%when the lattices corresponding to 
%the imaginary quadratic fields whose class number are $1$ 
%using a different method in \cite{Toy-BM} and \cite{Toy-BM2}. 
%In particular, our proof is a modular form free. 
%Let $K=\Q(\sqrt{-d})$ be an imaginary quadratic field, 
%and let $\mathcal{O}_{K}$ be its ring of the algebraic integers. 
%Let $\Cl_{K}$ be the ideal classes. In this paper, 
%we only consider the cases $\vert \Cl_{K}\vert =1$. 
%So, we denote by 
%$\oo$ the principal ideal class. 

\section{Preliminary}\label{section:pre}
\subsection{Spherical designs}
The concept of a spherical $t$-design is due to Delsarte-Goethals-Seidel \cite{DGS}. For a positive integer $t$, a finite nonempty set $X$ on the unit sphere
\[
S^{n-1} = \{x = (x_1, x_2, \cdots , x_n) \in \R ^{n}\ |\ x_1^{2}+ x_2^2+ \cdots + x_n^{2} = 1\}
\]
is called a spherical $t$-design in $S^{n-1}$ if the following condition is satisfied:
\[
\frac{1}{|X|}\sum_{x\in X}f(x)=\frac{1}{|S^{n-1}|}\int_{S^{n-1}}f(x)d\sigma (x), 
\]
for all polynomials $f(x) = f(x_1, x_2, \cdots ,x_n)$ of degree not exceeding $t$. Here, the righthand side means the surface integral on the sphere, and $|S^{n-1}|$ denotes the surface volume of the sphere $S^{n-1}$. The meaning of spherical $t$-design is that the average value of the integral of any polynomial of degree up to $t$ on the sphere is replaced by the average value at a finite set on the sphere. A finite subset $X$ in $S^{n-1}(r)$, the sphere of radius $r$, 
is also called a spherical $t$-design if $\frac{1}{r}X$ is 
a spherical $t$-design on the unit sphere $S^{n-1}$.

We denote by ${\rm {\rm Harm}}_{j}(\R^{n})$ the set of homogeneous harmonic polynomials of degree $j$ on $\R^{n}$. It is well known that $X$ is a spherical $t$-design if and only if the condition 
\begin{eqnarray*}
\sum_{x\in X}P(x)=0 
\end{eqnarray*}
holds for all $P\in {\rm Harm}_{j}(\R^{n})$ with $1\leq j\leq t$. 
Moreover, if $X\subset S^{1}(r)$ then, 
since $\mbox{Harm}_{k}(S^{1})=\langle \mbox{Re}(z^k), 
\mbox{Im}(z^k)\rangle$, 
the following proposition holds:
\begin{prop}\label{prop:R^2}
Let $X=\{\xi_1,\ldots,\xi_{n}\}\subset S^{1}$. 
We regard $S^1$ as complex numbers whose absolute values are one, 
namely, $S^1\simeq \{z\in\CC\mid\vert z\vert=1\}$. 
Then, $X$ is a spherical $t$-design if and only if 
\begin{eqnarray*}
\sum_{i=1}^{n}\xi_{i}^{k}=0, 
\end{eqnarray*}
for all $k\in \{1,\ldots,t\}$. 
\end{prop}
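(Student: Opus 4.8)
The plan is to reduce the general harmonic-polynomial criterion for spherical designs, already quoted in the text, to a statement purely about power sums of complex numbers, by making explicit the two-dimensional structure of $\mathrm{Harm}_k(\R^2)$. Recall that the text records two facts immediately preceding the proposition: first, that $X$ is a spherical $t$-design if and only if $\sum_{x\in X}P(x)=0$ for every $P\in\mathrm{Harm}_j(\R^n)$ with $1\leq j\leq t$; and second, that $\mathrm{Harm}_k(S^1)=\langle \mathrm{Re}(z^k),\mathrm{Im}(z^k)\rangle$. The proposition is essentially the combination of these two inputs in the special case $n=2$, together with the observation that a pair of real equations is the same as one complex equation.

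First I would identify $\R^2$ with $\C$ via $(x_1,x_2)\mapsto z=x_1+ix_2$, so that each point $\xi_i$ of $X\subset S^1$ becomes a complex number of modulus one. Under this identification the homogeneous polynomial $z^k=(x_1+ix_2)^k$ is holomorphic, hence harmonic, so its real and imaginary parts $\mathrm{Re}(z^k)$ and $\mathrm{Im}(z^k)$ are real homogeneous harmonic polynomials of degree $k$. To see they actually span $\mathrm{Harm}_k(\R^2)$, I would invoke the standard dimension count $\dim_{\R}\mathrm{Harm}_k(\R^2)=\binom{k+1}{k}-\binom{k-1}{k-2}=2$ for every $k\geq 1$, and note that the two functions are linearly independent over $\R$: writing $z=e^{i\theta}$ on the unit circle, they restrict to $\cos(k\theta)$ and $\sin(k\theta)$, which are independent for $k\geq 1$. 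This is exactly the basis assertion $\mathrm{Harm}_k(S^1)=\langle \mathrm{Re}(z^k),\mathrm{Im}(z^k)\rangle$ used in the text.

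With this basis in hand, I would argue as follows. Since the summation $\sum_{x\in X}$ is $\R$-linear and every element of $\mathrm{Harm}_j(\R^2)$ is a real linear combination of $\mathrm{Re}(z^j)$ and $\mathrm{Im}(z^j)$, the condition $\sum_{x\in X}P(x)=0$ for all $P\in\mathrm{Harm}_j(\R^2)$ holds if and only if the two real equations $\sum_{i=1}^n\mathrm{Re}(\xi_i^{\,j})=0$ and $\sum_{i=1}^n\mathrm{Im}(\xi_i^{\,j})=0$ both hold. These two real equations are together equivalent to the single complex equation $\sum_{i=1}^n\xi_i^{\,j}=0$, since a complex number vanishes precisely when its real and imaginary parts both vanish. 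Running this equivalence over all $j\in\{1,\ldots,t\}$ and combining with the harmonic-polynomial criterion yields the proposition in both directions.

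There is essentially no serious obstacle here: the structural content, namely the explicit description of $\mathrm{Harm}_k(\R^2)$, is already supplied by the text. The only point deserving a line of justification is the dimension count $\dim_{\R}\mathrm{Harm}_k(\R^2)=2$ together with the linear independence of $\mathrm{Re}(z^k)$ and $\mathrm{Im}(z^k)$; granting that, the remainder is the routine repackaging of two real conditions as one complex condition. I would therefore keep the write-up short, emphasizing the identification $\R^2\simeq\C$ and the passage from $\{\mathrm{Re},\mathrm{Im}\}$ to a single complex power sum.
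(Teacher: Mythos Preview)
Your proposal is correct and follows exactly the approach the paper intends: the paper does not give a separate proof but simply records, in the sentence immediately preceding the proposition, that it follows from the harmonic-polynomial criterion together with $\mathrm{Harm}_k(S^1)=\langle\mathrm{Re}(z^k),\mathrm{Im}(z^k)\rangle$. You have merely made explicit the dimension count and the passage from two real equations to one complex equation, which the paper leaves implicit.
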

For a lattice $\Lambda$ and a positive real number $m>0$, the shell of norm $m$ of $\Lambda$ is defined by 
\[
\Lambda_{m}:=\{x\in \Lambda\ |\ (x,x)=m \}=\Lambda\cap S^{n-1}(m), 
\]
where $(x,y)$ is the standard Euclidean inner product. 
The theta series $\Theta_{\Lambda}(q)$ of $\Lambda$ is the 
following formal power series
\[
\Theta_{\Lambda}(q)=
\sum_{x\in \Lambda}q^{(x, x)}
=\sum_{m=0}^{\scriptstyle \infty}\vert \Lambda_{m}\vert q^{m}.
%%\quad q=e^{\pi i z},\ \text{Im}(z)>0.
\]
For example, when $\Lambda$ is the $\ZZ^{2}$-lattice
\begin{eqnarray*}
\Theta_{\Lambda}(q)=\theta_3(q)^2
&=&1+\sum_{m=1}^{\infty}r_2(m) q^{m} \\
&=&1 + 4q + 4q^2 + 4q^4 + 8q^5 + 4q^8 +\cdots, 
\end{eqnarray*}
where $\theta_3(q)=1+\sum_{i=1}^{\infty}2q^{i^2}$ and 
the coefficient $r_{2}(m)$ is a number of ways of writing $m$ 
as a sum of $2$ squares. 

\subsection{Imaginary quadratic fields}
In this subsection, we review the theory of an imaginary quadratic field. 
%Generally, 
Let $K=\Q(\sqrt{-d})$ be an imaginary quadratic field, 
and let $\mathcal{O}_{K}$ be its ring of algebraic integers. 
Let $\Cl_{K}$ be the ideal classes. In this paper, 
we only consider the case $\vert \Cl_{K}\vert =1$. 
So, we denote by 
$\oo$ the principal ideal class. 
We denote by $d_{K}$ the discriminant of $K$:
\begin{eqnarray*}
d_{K}=\left\{
\begin{array}{lll}
-4d\ &{\rm if }\ -d\equiv 2,\ 3&\pmod {4}, \\
-d\ &{\rm if }\ -d\equiv 1 &\pmod {4}. 
\end{array}
\right.
\end{eqnarray*}
%Let $\ZZ[x]:=\{a+bx\mid a,b\in\ZZ\}$. 
\begin{thm}[cf.~{\cite[page~87]{Zagier}}]\label{thm:lattice}
Let $d$ be a positive square-free integer, and let $K=\Q(\sqrt{-d})$. 
Then 
\begin{eqnarray*}
\mathcal{O}_{K}=\ZZ+\ZZ\,\theta_d, 
\end{eqnarray*}
where 
\begin{eqnarray}\label{df:theta_d}
\theta_d=
\left\{
\begin{array}{lll}
\sqrt{-d}\quad &if\ -d\equiv 2,\ 3 &\pmod {4}, \\
\displaystyle\frac{1+\sqrt{-d}}{2}\quad &if\ -d\equiv 1 &\pmod {4}. 
\end{array}
\right.
\end{eqnarray}
\end{thm}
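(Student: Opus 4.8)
The plan is to compute $\mathcal{O}_K$ directly from the definition of an algebraic integer and match the resulting set against $\ZZ+\ZZ\,\theta_d$. An element $\alpha\in K$ lies in $\mathcal{O}_K$ precisely when its minimal polynomial over $\QQ$ has coefficients in $\ZZ$. Writing every element of $K=\QQ(\sqrt{-d})$ uniquely as $\alpha=a+b\sqrt{-d}$ with $a,b\in\QQ$, its minimal polynomial (when $b\neq 0$) is $x^2-2ax+(a^2+db^2)$. So the first step is to record the two integrality conditions $\mathrm{Tr}(\alpha)=2a\in\ZZ$ and $\mathrm{N}(\alpha)=a^2+db^2\in\ZZ$, the case $b=0$ being immediate since it forces $\alpha\in\ZZ$.

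Next I would bound the denominators of $a$ and $b$. The condition $2a\in\ZZ$ gives $a=m/2$ with $m\in\ZZ$, and substituting into $a^2+db^2\in\ZZ$ yields $4db^2\in\ZZ$. Writing $b=p/q$ in lowest terms, this forces $q^2\mid 4dp^2$, whence $q^2\mid 4d$ as $\gcd(p,q)=1$. Here the hypothesis that $d$ is squarefree is essential: the largest square dividing $4d$ is then $4$, so $q\in\{1,2\}$ and $b=v/2$ for some $v\in\ZZ$. Thus every $\alpha\in\mathcal{O}_K$ has the shape $\alpha=\frac{m+v\sqrt{-d}}{2}$ with $m,v\in\ZZ$, subject to the single congruence $m^2+dv^2\equiv 0\pmod{4}$ coming from $\mathrm{N}(\alpha)\in\ZZ$.

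The final step is a short analysis of this congruence according to $-d\bmod 4$, using that a square is $\equiv 0$ or $1\pmod 4$. When $-d\equiv 2,3\pmod 4$ (that is, $d\equiv 2,1\pmod 4$) the congruence forces both $m$ and $v$ to be even, so $\alpha\in\ZZ+\ZZ\sqrt{-d}=\ZZ+\ZZ\,\theta_d$; conversely $\sqrt{-d}$ is visibly integral, giving equality. When $-d\equiv 1\pmod 4$ (that is, $d\equiv 3\pmod 4$) the congruence becomes $m^2\equiv v^2\pmod 4$, i.e. $m\equiv v\pmod 2$, and I would check that the set of such $\frac{m+v\sqrt{-d}}{2}$ coincides with $\ZZ+\ZZ\,\theta_d$ for $\theta_d=\frac{1+\sqrt{-d}}{2}$ via the substitution $m=2s+t$, $v=t$. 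In each case the characterization of $\mathcal{O}_K$ as the solution set of the congruence is an equality with $\ZZ+\ZZ\,\theta_d$, so both inclusions follow at once.

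The computation is elementary throughout; the one point that genuinely uses the hypotheses rather than being formal is the denominator bound $q\in\{1,2\}$, where squarefreeness of $d$ is exactly what rules out larger denominators and pins down the index-two possibility responsible for the two distinct forms of $\theta_d$. The remaining mod-$4$ casework is routine, and I expect no real obstacle beyond keeping the two parity cases straight.
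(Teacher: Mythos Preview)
Your argument is correct and is the standard textbook computation of $\mathcal{O}_K$ for a quadratic field: reduce integrality to the trace/norm conditions, bound the denominators using squarefreeness of $d$, and finish with the mod~$4$ casework. There is nothing to compare against in the paper itself, since the authors do not prove this statement; it is quoted as a classical background fact with a reference to Zagier. Your write-up would serve perfectly well as the omitted proof.
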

Therefore, we consider $\OOO_{K}$ to be the lattice in $\R^{2}$ 
with the basis 
\begin{eqnarray*}
\left\{
\begin{array}{lll}
\displaystyle(1, 0), (1,\sqrt{-d} )\quad &\mbox{if}\ -d\equiv 2,\ 3 &\pmod {4}, \\
\displaystyle(1, 0), \Big(\frac{1}{2},\frac{\sqrt{-d}}{2}\Big)\quad &
\mbox{if}\ -d\equiv 1 &\pmod {4}, 
\end{array}
\right.
\end{eqnarray*}
denoted by $L_{\oo}$. 

It is well-known that there exists a one-to-one correspondence 
between the set of reduced quadratic forms $f(x, y)$ 
with a fundamental discriminant $d_K<0$ 
and the set of fractional ideal classes of the unique quadratic field 
$\QQ(\sqrt{-d})$ \cite[page~94]{Zagier}. Namely, 
For a fractional ideal $\mathfrak{a}=\ZZ\alpha +\ZZ\beta$, 
we obtain the quadratic form $ax^2+bxy+cy^2$, where 
$a=\alpha \overline{\alpha}/N(\mathfrak{a})$, 
$b=(\alpha\overline{\beta}+ \overline{\alpha}\beta)/N(\mathfrak{a})$ and 
$c=\beta \overline{\beta}/N(\mathfrak{a})$. 
Conversely, for a quadratic form $ax^2+bxy+cy^2$, 
we obtain the fractional ideal $\ZZ +\ZZ(b+\sqrt{d_K})/2a$. 
We remark that $N(\mathfrak{a})$ is the norm of $\mathfrak{a}$ and 
$\overline{\alpha}$ is a complex conjugate of $\alpha$. 
For example, $\ZZ+\ZZ\sqrt{-1}$, 
which is the principal ideal of $\QQ(\sqrt{-1})$, 
corresponds to $x^2+y^2$, that is, 
the $\ZZ^{2}$-lattice. 

Here, we define the automorphism group of $f(x, y)$ as follows: 
\begin{eqnarray*}
U_{f}=\left\{
\begin{pmatrix}
\alpha &\beta \\
\gamma &\delta
\end{pmatrix}
\in SL_{2}(\ZZ) \Biggm\vert f(\alpha x+\beta y, \gamma x+\delta y)=f(x, y)
\right\}
\end{eqnarray*}
Then, for $n\geq 1$, the number of the nonequivalent solutions of $f(x, y)=n$ 
under the action of $U_{f}$ 
is equal to the number of the integral ideal of norm $n$ \cite{Zagier}. 
\begin{thm}[cf.~{\cite[page~63]{Zagier}}]\label{thm:Zagier}
Let $f(x, y)$ be the reduced quadratic form 
with a fundamental discriminant $D<0$ and 
$U_{f}$ be the automorphism group of $f(x, y)$. 
Then 
\begin{eqnarray*}
\sharp U_{f}=
\left\{
\begin{array}{ll}
6 &{\text if}\ D=-3, \\
4 &{\text if}\ D=-4, \\
2 &{\text if}\ D<-4. 
\end{array}
\right.
\end{eqnarray*}
\end{thm}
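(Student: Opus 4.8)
The plan is to convert the computation of $\sharp U_f$ into counting the integral points of an ellipse, by way of the classical parametrization of the automorphs of a binary quadratic form. Write $f(x,y)=ax^2+bxy+cy^2$, let $A=\begin{pmatrix} a & b/2 \\ b/2 & c\end{pmatrix}$ be its Gram matrix, and put $J=\begin{pmatrix} 0 & -1 \\ 1 & 0\end{pmatrix}$, so that a matrix $M\in SL_2(\ZZ)$ lies in $U_f$ precisely when $M^{T}AM=A$. I would prove that the assignment $(t,u)\mapsto M_{t,u}:=\tfrac{t}{2}I+u\,JA=\begin{pmatrix}(t-bu)/2 & -cu \\ au & (t+bu)/2\end{pmatrix}$ is a bijection from the integral solutions of the Pell-type equation $t^{2}-Du^{2}=4$ onto $U_f$; granting this, the theorem becomes the finite problem of counting solutions of $t^{2}+|D|u^{2}=4$. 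For the forward direction, integrality of the diagonal entries follows from a parity check: since $D\equiv b^{2}\equiv b\pmod 2$, reducing $t^{2}-Du^{2}=4$ modulo $2$ gives $t\equiv bu\pmod 2$, so $t\pm bu$ is even. Both that $M_{t,u}\in SL_2(\ZZ)$ and that it preserves $f$ then drop out of one identity: using $J+J^{T}=0$, $J^{T}AJ=(\det A)A^{-1}$ and $\det A=-D/4$, one computes $M_{t,u}^{T}AM_{t,u}=\tfrac{t^{2}-Du^{2}}{4}A=A$ and likewise $\det M_{t,u}=\tfrac{t^{2}-Du^{2}}{4}=1$.

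The main obstacle is surjectivity: showing that every $M\in U_f$ arises as some $M_{t,u}$. Here I would argue linearly rather than by brute expansion of the three coefficient equations. Since $\det M=1$, Cayley--Hamilton gives $M^{-1}=tI-M$ with $t=\operatorname{tr}M$; multiplying $M^{T}AM=A$ on the right by $M^{-1}$ yields $M^{T}A=A(tI-M)$, hence the relation $M^{T}A+AM=tA$. Setting $N:=M-\tfrac{t}{2}I$, this says $N^{T}A+AN=0$, i.e. $AN$ is antisymmetric, hence $AN=\lambda J$ for a scalar $\lambda$; since $\mathrm{adj}(A)J=JA$, we get $N=\lambda A^{-1}J=u\,JA$ with $u=\lambda/\det A$, that is, $M=M_{t,u}$. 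It remains to see that $u$ is an \emph{integer}, and this is exactly where the hypothesis that $D$ is fundamental enters: a fundamental discriminant forces $f$ to be primitive, $\gcd(a,b,c)=1$, and since $au=\gamma$, $bu$, $cu=-\beta$ are all integers, writing $1=ax+by+cz$ gives $u=(au)x+(bu)y+(cu)z\in\ZZ$. Finally $\det M=1$ translates into $t^{2}-Du^{2}=4$, completing the bijection.

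With the bijection established, the proof ends by inspecting $t^{2}+|D|u^{2}=4$ (recall $D<0$). Taking $u=0$ forces $t=\pm2$ and produces the two central automorphs $\pm I$, present for every $D$; a nonzero $u$ needs $|D|u^{2}\le 4$, hence $|D|\le 4$, and $-3$, $-4$ are the only fundamental discriminants with $|D|<5$. For $D=-3$ the additional solutions are $(t,u)=(\pm1,\pm1)$, four of them, so $\sharp U_f=6$; for $D=-4$ they are $(0,\pm1)$, so $\sharp U_f=4$; and for $D<-4$ no nonzero $u$ survives, so $\sharp U_f=2$. As a sanity check these are precisely the orders allowed by the crystallographic restriction for a planar-lattice rotation group containing $-I$, the three regimes being detected by the presence of an automorph of trace $1$ (order $6$), of trace $0$ (order $4$), or of none beyond $\pm I$ (order $2$).
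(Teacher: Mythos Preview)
Your argument is correct and complete. The parametrization $(t,u)\mapsto M_{t,u}=\tfrac{t}{2}I+u\,JA$ of automorphs by integral solutions of $t^{2}-Du^{2}=4$ is the classical one; the verification that $M_{t,u}^{T}AM_{t,u}=\tfrac{t^{2}-Du^{2}}{4}A$ via $J^{T}AJ=(\det A)A^{-1}$ is clean, and your surjectivity step---using Cayley--Hamilton to force $AN$ antisymmetric, hence $N=u\,JA$, and then invoking primitivity (which you correctly deduce from $D$ being fundamental) to get $u\in\ZZ$---is carried out carefully. The final case count of $t^{2}+|D|u^{2}=4$ is straightforward and your observation that $-3$ and $-4$ are the only fundamental discriminants with $|D|\le 4$ finishes things.

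As for comparison with the paper: there is nothing to compare. The paper does not prove Theorem~\ref{thm:Zagier}; it merely quotes it as a classical fact from Zagier's book and uses it as input for the theta-series computations that follow. Your write-up therefore supplies a proof where the paper gives none, and the route you take---reduction to the Pell-type equation---is exactly the standard textbook argument (essentially what one finds in Zagier or in Gauss's \emph{Disquisitiones}). The crystallographic-restriction remark at the end is a nice sanity check but not needed for the proof.
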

These classical results are due to Gauss, Dirichlet, etc. 
Let $\mathfrak{a}$ be an ideal class and 
$f_{\mathfrak{a}}(x, y)$ be the reduced quadratic form 
corresponding to $\mathfrak{a}$. 
Moreover, let $L_{\mathfrak{a}}$ be the lattice corresponding to $f(x, y)$. 
We denote by $N(A)$ the norm of an ideal $A$. 
Then, using Theorem \ref{thm:Zagier}, we have 
\begin{eqnarray*}
\sum_{x\in L_{\mathfrak{a}}}q^{(x, x)}=
1+\sharp U_{f}\sum_{n=1}^{\infty}
\sharp\{A \mid A\mbox{ is an integral ideal of }\mathfrak{a},\, 
N(A)=n\}\,q^{m}. 
\end{eqnarray*}
Moreover, let $\{\mathfrak{a_i}\}_{i=1}^{s}$ be the 
complete set of ideal classes of an imaginary quadratic field 
whose class number is $s$ and 
let $\{L_{\mathfrak{a}_i}\}_{i=1}^{s}$ be the lattices 
corresponding to $\{\mathfrak{a_i}\}_{i=1}^{s}$. 
We denote by $a(m)$ the $m$-th coefficient of 
the sum of theta functions:
\begin{eqnarray*}
\sum_{x\in L_{\mathfrak{a}_1}}q^{(x,x)}+\cdots 
+\sum_{x\in L_{\mathfrak{a}_s}}q^{(x,x)}=\sum_{m=0}^{\infty}a(m)q^{m}. 
\end{eqnarray*}
Then, since the prime ideal factorization is unique, 
the following proposition holds:
\begin{prop}[cf.~{\cite[page 101]{Zagier}}]\label{thm:mul}
$a^{\prime}(m):=a(m)/\sharp U_{f_{\mathfrak{a}_i}}$ have the multiplicative property. 
Namely, $a^{\prime}(mn)=a^{\prime}(m)a^{\prime}(n)$ if $(m,n)=1$. 
\end{prop}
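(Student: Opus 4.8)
The plan is to reduce the statement to the classical fact that the number of integral ideals of $\mathcal{O}_{K}$ of a given norm is a multiplicative arithmetic function, and then to prove that fact directly from unique factorization of ideals into prime ideals, so as to stay within the elementary (modular form free) framework of the paper.

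First I would note that $\sharp U_{f_{\mathfrak{a}_i}}$ does not actually depend on $i$: by Theorem \ref{thm:Zagier} it depends only on the fundamental discriminant $d_K$, which is shared by all reduced forms of the field. Call this common value $w$ (so $w = 6, 4, 2$ according as $d_K = -3, -4, < -4$); then $a'(m) = a(m)/w$ is well defined. Combining the displayed formula for $\sum_{x \in L_{\mathfrak{a}}} q^{(x,x)}$ with the definition of $a(m)$, and using that every integral ideal lies in exactly one of the ideal classes $\mathfrak{a}_1,\dots,\mathfrak{a}_s$, one gets for every $m \ge 1$
\[
a(m) = w \sum_{i=1}^{s} \sharp\{A \mid A \text{ integral ideal in } \mathfrak{a}_i,\ N(A) = m\} = w\, i_K(m),
\]
where $i_K(m)$ is the total number of integral ideals of $\mathcal{O}_{K}$ of norm $m$. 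Hence $a'(m) = i_K(m)$ for $m \ge 1$, and it suffices to show $i_K(mn) = i_K(m)\,i_K(n)$ whenever $(m,n) = 1$.

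For this I would exhibit an explicit bijection. Given an integral ideal $A$ with $N(A) = mn$, factor it uniquely as $A = \prod_j \mathfrak{p}_j^{e_j}$ with the $\mathfrak{p}_j$ distinct prime ideals; each $\mathfrak{p}_j$ lies above a unique rational prime $p_j$, and since $(m,n) = 1$ each such $p_j$ divides exactly one of $m$, $n$. Let $B$ be the product of those $\mathfrak{p}_j^{e_j}$ with $p_j \mid m$ and $C$ the product of the rest, so $A = BC$. Since the ideal norm is multiplicative, $N(B)$ is a product of powers of primes dividing $m$ and $N(C)$ a product of powers of primes dividing $n$; from $N(B)N(C) = mn$ with $(m,n) = 1$ we get $N(B) = m$ and $N(C) = n$. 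Conversely, given integral ideals $B, C$ with $N(B) = m$ and $N(C) = n$, they share no common prime divisor (their norms are coprime), so $A := BC$ satisfies $N(A) = mn$, and $A \mapsto (B,C)$ is inverse to the previous map. Therefore $i_K(mn) = i_K(m)\,i_K(n)$, which proves the proposition.

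I do not expect a genuine obstacle; the only points needing care are the remark that $\sharp U_{f_{\mathfrak{a}_i}}$ is independent of the class (so that $a'$ is meaningfully defined) and the bookkeeping showing that the coprimality $(m,n) = 1$ forces the split of the norm to match the split of the ideal. If one prefers, the same conclusion follows by grouping the local factors of the Euler product $\zeta_K(s) = \sum_m i_K(m)\, m^{-s} = \prod_{\mathfrak{p}} (1 - N(\mathfrak{p})^{-s})^{-1}$ according to the underlying rational prime, but the direct bijection keeps the argument self-contained.
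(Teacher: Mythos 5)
Your proposal is correct and follows essentially the same route as the paper, which simply observes that $a'(m)$ counts the integral ideals of norm $m$ and that this count is multiplicative by unique factorization into prime ideals (citing Zagier); you have merely filled in the details with the explicit bijection $A \leftrightarrow (B,C)$ and the (worthwhile) remark that $\sharp U_{f_{\mathfrak{a}_i}}$ depends only on $d_K$. No gaps.
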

For example, let $\mathfrak{o}=\ZZ[\sqrt{-1}]$ be the 
only ideal class of $\QQ(\sqrt{-1})$. 
Then, $L_{\mathfrak{o}}$ is the $\ZZ^{2}$-lattice and 
\begin{eqnarray*}
\Theta_{\ZZ^{2}}(q)=\sum_{x\in \ZZ^{2}}q^{(x,x)}=\theta_{3}^{2}(q)
=\sum_{m=0}^{\infty}a(m)q^{m}, 
\end{eqnarray*}
where $\theta_{3}(q)=1+\sum_{i=1}^{\infty}2q^{i^2}$. 
Therefore, the coefficients $a(m)/4$ have the multiplicative property. 

Finally, we give the classical theorems needed later. 
\begin{thm}[cf.~{\cite[page 104,~Proposition 5.16]{Cox}}]\label{thm:facprime}
We can classify the prime ideals of a quadratic field as follows{\rm :}
\begin{enumerate}
\item 
If $p$ is an odd prime and $(d_{K}/p)=1$ 
$($resp.\ $d_{K}\equiv 1 \pmod{8}$$)$ then 
$(p)=P \overline{P}\ (resp.\ (2)=P \overline{P})$, 
where $P$ and $\overline{P}$ are prime ideals 
with $P\neq \overline{P}$, 
$N(P)=N(\overline{P})=p$ $($resp.\ $N(P)=2$$)$. 
\item 
If $p$ is an odd prime and $(d_{K}/p)=-1$ 
$($resp.\ $d_{K}\equiv 5 \pmod{8}$$)$ then 
$(p)=P\ (resp.\ (2)=P)$, 
where $P$ is a prime ideal with $N(P)=p^{2}$ $($resp.\ $N(P)=4$$)$. 
\item 
If $p\ \vert\ d_{k}$ then 
$(p)=P^{2}$, 
where $P$ is a prime ideal with $N(P)=p$. 
\end{enumerate}

%\begin{tabular}{ll} 
%{\rm (i)} & If p is an odd prime and $(d_{K}/p)=1$ then $(p)=P P^{\prime}$, 
%$P \neq P^{\prime}$, $N(P)=p$. \\ 
%& Moreover, if $d_{K}\equiv 1 \pmod{8}$ then $(2)=P P^{\prime}$, 
%$P \neq P^{\prime}$, $N(P)=2$. \\
%{\rm (ii)}& If p is an odd prime and $(d_{K}/p)=-1$ then $(p)=P$, $N(P)=p^{2}$.% \\ 
%& Moreover, if $d_{K}\equiv 5 \pmod{8}$ then $(2)=P $, $N(P)=4$. \\
%{\rm (iii)}& If p is an odd prime and $p\ \vert\ d_{k}$ then $(p)=P^{2}$, $N(P)%=p$. 
%\end{tabular}
\end{thm}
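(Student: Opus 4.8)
The plan is to reduce the entire classification to the Dedekind--Kummer factorization theorem, exploiting the fact (Theorem~\ref{thm:lattice}) that $\mathcal{O}_{K}=\ZZ[\theta_d]$ is monogenic. First I would record the minimal polynomial $f(x)$ of $\theta_d$ over $\QQ$: when $-d\equiv 2,3\pmod 4$ one has $\theta_d=\sqrt{-d}$ and $f(x)=x^{2}+d$, while when $-d\equiv 1\pmod 4$ one has $\theta_d=(1+\sqrt{-d})/2$ and $f(x)=x^{2}-x+(1+d)/4$. A direct computation shows that in both cases the discriminant of $f$ equals the field discriminant $d_{K}$ (namely $-4d$ and $-d$, respectively). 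This identification is precisely what allows the symbol $(d_{K}/p)$ to govern the splitting behaviour.

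Next I would invoke the Dedekind--Kummer theorem: since $\mathcal{O}_{K}=\ZZ[\theta_d]$, for every prime $p$ the factorization of $(p)$ into prime ideals mirrors the factorization $f(x)\equiv\prod_i g_i(x)^{e_i}\pmod p$ into distinct monic irreducibles over $\FF_p$, via $P_i=(p,g_i(\theta_d))$, with $N(P_i)=p^{\deg g_i}$ and $(p)=\prod_i P_i^{e_i}$. Because $f$ is quadratic there are exactly three possible factorization types over $\FF_p$, and matching these three types to the three cases of the statement is the core of the argument.

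For an odd prime $p$, the quadratic $f\bmod p$ has two distinct roots, is irreducible, or has a double root exactly according as its discriminant $d_{K}$ is a nonzero square, a nonsquare, or zero modulo $p$, i.e.\ according as $(d_{K}/p)$ equals $1$, $-1$, or $0$. These yield $(p)=P\overline{P}$ with $N(P)=N(\overline{P})=p$ and $P\neq\overline{P}$; then $(p)=P$ with $N(P)=p^{2}$; and finally $(p)=P^{2}$ with $N(P)=p$ (the last being the case $p\mid d_{K}$) --- which are exactly cases~1, 2, and 3 for odd $p$. Here $\overline{P}$ is the image of $P$ under complex conjugation, the nontrivial element of $\mathrm{Gal}(K/\QQ)$; since conjugation permutes the primes over $p$ and fixes $(p)$, the two split primes are conjugate and distinct, whereas the ramified prime satisfies $P=\overline{P}$.

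The prime $p=2$ must be handled separately, and this is the only delicate point. If $d_{K}$ is even then $2\mid d_{K}$ and case~3 applies directly. If $d_{K}$ is odd --- equivalently $-d\equiv 1\pmod 4$, so that $f(x)=x^{2}-x+(1+d)/4$ --- I would reduce this $f$ modulo $2$ rather than appeal to a residue symbol. One checks that $f\equiv x(x+1)$ splits when the constant term $(1+d)/4$ is even, which happens precisely when $d_{K}\equiv 1\pmod 8$, and that $f\equiv x^{2}+x+1$ is irreducible when $(1+d)/4$ is odd, which happens precisely when $d_{K}\equiv 5\pmod 8$; these give the ``resp.''\ assertions of cases~1 and 2, with the norms $N(P)=2$ and $N(P)=4$ as stated. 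The main obstacle is thus not the odd-prime analysis, which is immediate from the discriminant, but keeping the two shapes of $\mathcal{O}_{K}$ straight and carrying out the parity bookkeeping at $p=2$ that replaces the Legendre symbol by the congruence class of $d_{K}$ modulo $8$.
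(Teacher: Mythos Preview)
Your proof is correct, and the Dedekind--Kummer approach you outline is the standard one. Note, however, that the paper does not actually supply a proof of this theorem: it is stated with a citation to Cox and used as a classical input, so there is no in-paper argument to compare against. For what it is worth, the proof in Cox's Proposition~5.16 is essentially the same as yours --- factor the minimal polynomial of $\theta_d$ modulo $p$ and read off the three splitting types from the discriminant, handling $p=2$ by a direct congruence check --- so your write-up would serve as a faithful reconstruction of the cited reference.
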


\begin{prop}\label{prop:NUM}
Let $F(m)$ be the number of the integral ideals of norm $m$ of $K$. 
Let $p$ be a prime number. 
Then, if $p\neq 2$ 
\begin{eqnarray*}
F(p^{e})=\left\{
\begin{array}{lll}
e+1 &{ if}\ \left(d_{K}/p\right)=1, \\
(1+(-1)^e)/2 &{ if}\ \left(d_{K}/p\right)=-1, \\
1 &{ if}\ p\ \vert\ d_{K}, 
\end{array} 
\right.
\end{eqnarray*}
if $p= 2$ 
\begin{eqnarray*}
F(2^{e})=\left\{
\begin{array}{lll}
e+1 &{ if}\ d_{K}\equiv 1 \pmod{8}, \\
(1+(-1)^e)/2 &{ if}\ d_{K}\equiv 5 \pmod{8}, \\
1 &{ if}\ 2\ \vert\ d_{K}. 
\end{array} 
\right.
\end{eqnarray*}
\end{prop}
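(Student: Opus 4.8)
The plan is to reduce the count of integral ideals of norm $p^e$ to the unique factorization of ideals in $\mathcal{O}_K$ into prime ideals, combined with the multiplicativity of the ideal norm, and then to read off the three cases directly from Theorem~\ref{thm:facprime}. Note that the hypothesis of class number $1$ plays no role here: $F(p^e)$ counts \emph{all} integral ideals of the given norm, principal or not.

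First I would observe that if $\mathfrak{a}$ is an integral ideal with $N(\mathfrak{a})=p^e$ and $\mathfrak{q}$ is any prime ideal dividing $\mathfrak{a}$, then $N(\mathfrak{q})$ divides $N(\mathfrak{a})=p^e$; since $\mathfrak{q}$ lies above the rational prime $\mathfrak{q}\cap\ZZ$ and its norm is a power of that rational prime, we must have $\mathfrak{q}\mid (p)$. Hence, by unique factorization of ideals, every integral ideal of norm $p^e$ is a product of the prime ideals occurring in the factorization of $(p)$ in $\mathcal{O}_K$, and the problem becomes a purely combinatorial count of the admissible exponent vectors.

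Then I would proceed case by case using Theorem~\ref{thm:facprime} (and, for $p=2$, the analogous conditions $d_K\equiv 1,5\pmod 8$ and $2\mid d_K$ in place of the Kronecker symbol). In the split case $(p)=P\overline{P}$ with $P\neq\overline{P}$ and $N(P)=N(\overline{P})=p$ (resp.\ $=2$), the ideals of norm $p^e$ are exactly $P^{a}\overline{P}^{\,e-a}$ for $0\le a\le e$, and these are pairwise distinct since $P\neq\overline{P}$; this gives $F(p^e)=e+1$. In the inert case $(p)=P$ with $N(P)=p^2$ (resp.\ $=4$), the only candidates are powers $P^{a}$ with $2a=e$, so there is exactly one such ideal when $e$ is even and none when $e$ is odd, i.e.\ $F(p^e)=(1+(-1)^e)/2$. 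In the ramified case $(p)=P^{2}$ with $N(P)=p$, the only ideal of norm $p^e$ is $P^{e}$, so $F(p^e)=1$. The argument for $p=2$ is verbatim the same once Theorem~\ref{thm:facprime} supplies the three splitting behaviours.

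The only delicate point — and the nearest thing to an obstacle — is ensuring that the prime ideals above $p$ are genuinely distinct (so that distinct exponent vectors yield distinct ideals) and, in the split case, that $\overline{P}\neq P$; but both facts are already built into the statement of Theorem~\ref{thm:facprime}, so no additional work is needed. The remaining verifications (multiplicativity of $N$ on products of the $P$'s, and that $N(P^a\overline{P}^b)=p^{a+b}$) are immediate.
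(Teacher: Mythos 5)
Your proposal is correct and follows essentially the same route as the paper: factor $(p)$ via Theorem~\ref{thm:facprime}, then use unique factorization of ideals and multiplicativity of the norm to count exponent vectors, giving $e+1$, $(1+(-1)^e)/2$, and $1$ in the split, inert, and ramified cases respectively. The paper writes out only the split case and states the others are similar, so your version simply fills in details (including the reduction showing every prime divisor lies above $p$) without changing the argument.
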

\begin{proof}
When $\left(d_{K}/p\right)=1$ i.e., $(p)=P \overline{P}$ and 
$P \neq \overline{P}$, 
since $P$ and $\overline{P}$ are only integral ideals 
of norm $p$, 
we have $F(p)=2$. 
Moreover, the integral ideals of norm $p^{e}$ are as follows: 
$P^{e}$, $P^{e-1} \overline{P}$, \ldots , 
$(\overline{P})^{e}$. So, we have $F(p^{e})=e+1$. 
The other cases can be proved similarly. 
\end{proof}
\subsection{The results of Calcut}\label{subsection:Calcut}
We collect Calcut's results needed later. 
\begin{lem}[cf.~\cite{Calcut}]\label{lem:Calcut_Z^2}
Let $z\neq 0$ be a Gaussian integer. There is 
a natural number $n$ such that $z^n$ is real 
if and only if $\arg z$ is a multiple of $\pi/4$. 
\end{lem}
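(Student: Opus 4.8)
The plan is to treat the two implications separately; essentially all the content is in the ``only if'' direction.

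The ``if'' direction is immediate. If $\arg z = k\pi/4$ for some integer $k$, then one value of $\arg(z^4)$ is $k\pi$, which is $0$ or $\pi$ modulo $2\pi$; hence $z^4$ is a nonzero real number, and we may take $n=4$.

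For the ``only if'' direction, suppose $z^n\in\RR$ for some natural number $n$. A real number equals its complex conjugate, so $z^n=\overline{z^n}=\overline{z}^{\,n}$, and the idea is to read off the consequences of this identity in $\ZZ[\sqrt{-1}]$ via unique factorization. Up to units the Gaussian primes are $1+\sqrt{-1}$ (which divides $2$), the conjugate pairs $\pi_j,\overline{\pi_j}$ lying over rational primes $p\equiv 1\pmod 4$, and the rational primes $q\equiv 3\pmod 4$ (which stay prime in $\ZZ[\sqrt{-1}]$). Write the factorization of $z$ as $z = u\,(1+\sqrt{-1})^{a_0}\prod_j \pi_j^{a_j}\overline{\pi_j}^{b_j}\prod_\ell q_\ell^{c_\ell}$ with $u$ a unit. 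Using $\overline{1+\sqrt{-1}} = -\sqrt{-1}\,(1+\sqrt{-1})$ and the fact that the inert primes are fixed by conjugation, the factorization of $\overline{z}^{\,n}$ has $\pi_j$ to the exponent $n b_j$ while that of $z^n$ has it to the exponent $n a_j$; comparing exponents forces $a_j=b_j$ for every $j$ (the exponents of $1+\sqrt{-1}$ match automatically, and the unit parts absorb the rest). Consequently $z = u\,(1+\sqrt{-1})^{a_0}\,m$ with $m=\bigl(\prod_j p_j^{a_j}\bigr)\bigl(\prod_\ell q_\ell^{c_\ell}\bigr)$ a positive rational integer, so $\arg z = \arg u + a_0\,\pi/4 \in \tfrac{\pi}{4}\ZZ$ since $\arg m=0$ and $\arg(1+\sqrt{-1})=\pi/4$.

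The main obstacle is exactly the middle step: one must handle the ramified prime $1+\sqrt{-1}$ and the units with enough care that unique factorization yields the sharp conclusion $a_j=b_j$ rather than merely a congruence. A variant that sidesteps unique factorization runs as follows: from $z^n=\overline{z}^{\,n}$ we get $(z/\overline z)^n=1$, so $w:=z/\overline z$ is a root of unity; but $w=z^2/(z\overline z)$ has rational real and imaginary parts, and the only roots of unity with rational coordinates are $\pm 1,\pm\sqrt{-1}$ (a quick consequence of Niven's theorem on rational values of the cosine). Checking $w\in\{1,-1,\sqrt{-1},-\sqrt{-1}\}$ forces $z$ to be real, purely imaginary, or to satisfy $a=\pm b$ when $z=a+b\sqrt{-1}$, i.e.\ $\arg z\in\tfrac{\pi}{4}\ZZ$ in every case.
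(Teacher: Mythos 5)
Your proposal is correct, and both of your arguments go through: the exponent comparison in $\ZZ[\sqrt{-1}]$ does force $a_j=b_j$ at every split prime (the ramified prime $1+\sqrt{-1}$ and the units cause no trouble, as you note), and the root-of-unity variant is also sound. The comparison with the paper is worth making explicit. The paper never proves this Lemma directly: it quotes it from Calcut and instead establishes equivalences --- deriving the Lemma from Corollary \ref{cor:Calcut_Z^2_1} (rational values of $\tan(k\pi/n)$, a Niven-type statement), and, in Section \ref{subsection:mul}, deriving it from the multiplicative property of the ideal-counting function: if $z^n\in\RR$ then $(z)^n=\overline{(z)^n}$, which would depress the number of ideals of norm $N(z)^n$ below $(1+a_1)\cdots(1+a_s)$. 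Its proof of the generalization (Theorem \ref{thm:Calcut}) likewise reduces to primitive $z$ and argues via prime ideal factorization. Your first argument is the same underlying mechanism --- conjugation symmetry plus unique factorization --- but carried out directly on Gaussian primes, which buys a self-contained element-level proof with no reduction to primitive $z$, no ideal counting, and no appeal to Calcut; your second variant ($z/\overline{z}$ is a root of unity with rational coordinates, hence in $\{\pm 1,\pm\sqrt{-1}\}$) is the element-level analogue of the paper's route through Corollary \ref{cor:Calcut_Z^2_1}, trading Calcut's corollary for Niven's theorem. Either variant could replace the citation, and the first one transparently generalizes to the class-number-one fields treated in Theorem \ref{thm:Calcut}.
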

\begin{cor}[cf.~\cite{Calcut}]\label{cor:Calcut_Z^2_1}
The only rational values of $\tan (k\pi/n)$ 
are $0$ and $\pm 1$. 
\end{cor}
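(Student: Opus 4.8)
The plan is to derive this corollary directly from Lemma~\ref{lem:Calcut_Z^2} by translating the rationality of a tangent into the language of Gaussian integers. One direction is trivial: $\tan(0)=0$, $\tan(\pi/4)=1$ and $\tan(3\pi/4)=-1$ exhibit $0$ and $\pm1$ as rational values of the form $\tan(k\pi/n)$, so the entire content lies in showing that no \emph{other} rational value can occur.

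First I would suppose that $r:=\tan(k\pi/n)$ is rational and finite, and write $r=b/a$ with $a,b\in\ZZ$ and $a\neq 0$ (finiteness keeps the angle away from odd multiples of $\pi/2$, so such an $a$ exists). Form the nonzero Gaussian integer $z=a+bi$. Since the tangent of the argument of a complex number equals the ratio of its imaginary to its real part, we have $\tan(\arg z)=b/a=r=\tan(k\pi/n)$, and hence $\arg z\equiv k\pi/n\pmod{\pi}$ because $\tan$ has period $\pi$. In particular $\arg z$ is a rational multiple of $\pi$.

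The key step is to observe that this rationality of $\arg z$ forces a real power of $z$. Writing $\arg z=(p/q)\pi$ with $p,q\in\ZZ$ and $q\geq 1$, the natural number $N:=q$ gives $N\arg z=p\pi$, so that $z^{N}=\lvert z\rvert^{N}e^{iN\arg z}$ is real. Lemma~\ref{lem:Calcut_Z^2} now applies and forces $\arg z$ to be an integer multiple of $\pi/4$, say $\arg z=j\pi/4$. Because $a\neq 0$, the angle $\arg z$ is not an odd multiple of $\pi/2$, so $j\not\equiv 2\pmod 4$; the surviving cases $j\equiv 0,1,3\pmod 4$ yield $\tan(j\pi/4)\in\{0,1,-1\}$. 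Therefore $r=\tan(\arg z)\in\{0,\pm1\}$, which is exactly the assertion.

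The main obstacle here is conceptual rather than computational: it is the bridge from ``$\arg z$ is a rational multiple of $\pi$'' to ``$z^{N}$ is real for some natural number $N$,'' since this is precisely the hypothesis that unlocks Lemma~\ref{lem:Calcut_Z^2}. Once that bridge is in place the statement follows immediately, which is why it is recorded as a corollary rather than proved from scratch.
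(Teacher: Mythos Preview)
Your proof is correct and follows exactly the route the paper attributes to Calcut: assume $\tan(k\pi/n)$ is rational, realize it as $b/a$ for a Gaussian integer $z=a+bi$, use rationality of $\arg z/\pi$ to find an $N$ with $z^N\in\RR$, and invoke Lemma~\ref{lem:Calcut_Z^2} to force $\arg z$ into the multiples of $\pi/4$. The paper itself does not spell out this argument but simply records the corollary with the citation and states that Calcut proved ``Lemma~\ref{lem:Calcut_Z^2} $\Rightarrow$ Corollary~\ref{cor:Calcut_Z^2_1}''; your write-up is precisely that implication made explicit.
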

\begin{cor}[cf.~\cite{Calcut}]\label{cor:Calcut_Z^2_2}
Let $z_i=a_i+b_i\sqrt{-1} \in \ZZ[\sqrt{-1}]$. 
If 
\[
\displaystyle\frac{k\pi}{n}
=\sum_{j=1}^{l}m_j\arctan\frac{b_j}{a_j}, 
\]
holds, where all variables are rational integers, then 
$k\pi/n = s\pi/4$ for some integer $s$. 
\end{cor}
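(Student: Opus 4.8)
The plan is to reduce the statement to a rationality assertion about $\tan(k\pi/n)$ and then invoke Corollary \ref{cor:Calcut_Z^2_1}. Writing $\theta = k\pi/n$, the conclusion $\theta = s\pi/4$ is equivalent to $\theta$ being an integer multiple of $\pi/4$, and an elementary inspection shows that this holds precisely when $\tan\theta \in \{0, +1, -1\}$ or $\tan\theta$ is undefined: indeed $\tan\theta = 0$ forces $\theta \equiv 0 \pmod\pi$, $\tan\theta = \pm 1$ forces $\theta \equiv \pm\pi/4 \pmod\pi$, and $\tan\theta$ undefined forces $\theta \equiv \pi/2 \pmod\pi$, and these four residue classes modulo $\pi$ are exactly the multiples of $\pi/4$. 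So it suffices to prove that $\tan\theta$ is either rational or undefined; Corollary \ref{cor:Calcut_Z^2_1}, which asserts that the only rational values of $\tan(k\pi/n)$ are $0$ and $\pm 1$, then finishes the job.

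The key step is to realize $\theta$ as the argument of a single Gaussian rational. Assuming, as the hypothesis implicitly requires, that each $a_j \neq 0$ so that $\arctan(b_j/a_j)$ is defined, I set $w := \prod_{j=1}^{l} z_j^{m_j}$, where a factor with $m_j < 0$ is read as $z_j^{-|m_j|} = \overline{z_j}^{\,|m_j|}/N(z_j)^{|m_j|}$. Since each $z_j \in \ZZ[\sqrt{-1}]$ is nonzero, $w$ lies in $\QQ(\sqrt{-1})$, so $w = u + v\sqrt{-1}$ with $u, v \in \QQ$. Because $\arg$ is additive on products and $\arg(z_j) \equiv \arctan(b_j/a_j) \pmod\pi$, one obtains $\arg(w) \equiv \sum_{j=1}^{l} m_j \arctan(b_j/a_j) = \theta \pmod\pi$. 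As $\tan$ has period $\pi$, this yields $\tan\theta = \tan(\arg w) = v/u$, which is rational whenever $u \neq 0$ and undefined exactly when $u = 0$.

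It then remains to conclude. If $u \neq 0$, then $\tan\theta = v/u$ is a rational value of $\tan(k\pi/n)$, so by Corollary \ref{cor:Calcut_Z^2_1} we have $\tan\theta \in \{0, \pm 1\}$; if $u = 0$, then $\tan\theta$ is undefined. In every case the inspection of the first paragraph gives that $\theta = k\pi/n$ is an integer multiple of $\pi/4$, i.e.\ $k\pi/n = s\pi/4$ for some $s \in \ZZ$, as required.

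I expect the main obstacle to lie in the bookkeeping of the second step: the passage from the sum of arctangents to $\arg(w)$ is only an equality modulo $\pi$, since $\arctan$ takes values in $(-\pi/2,\pi/2)$ while the true argument of $z_j$ differs from it by $\pi$ when $a_j < 0$, and the integer multipliers $m_j$ can wind the total argument around the circle repeatedly. This is precisely why the whole argument is routed through $\tan\theta$ rather than through $\theta$ itself, as $\tan$ is insensitive to these $\pmod\pi$ ambiguities. A minor degenerate case is $a_j = 0$, excluded above because $\arctan(b_j/a_j)$ is then ill-defined; should one wish to admit it, the same proof applies verbatim with the term $\arg(z_j) = \pm\pi/2$ in place of the offending arctangent.
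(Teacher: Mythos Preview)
Your proof is correct. Both you and the paper form the single Gaussian number $w=\prod_j z_j^{m_j}$ and read off the conclusion from one of Calcut's equivalent statements; the difference is only in which one. The paper (following Calcut, and in its own proof of the generalization in Corollary~\ref{cor:Calcut}) observes that $\arg w \equiv k\pi/n$, hence $w^{n}$ (or $w^{2n}$) is real, and then invokes Lemma~\ref{lem:Calcut_Z^2} to force $\arg w$ to be a multiple of $\pi/4$. You instead compute $\tan(k\pi/n)=\tan(\arg w)=v/u\in\QQ\cup\{\infty\}$ and invoke Corollary~\ref{cor:Calcut_Z^2_1}. Since the paper itself shows these statements are equivalent, the two routes are interchangeable; what your version buys is a cleaner handling of the $\bmod\ \pi$ ambiguity between $\arctan(b_j/a_j)$ and $\arg z_j$, which you absorb automatically by passing through the $\pi$-periodic function $\tan$, whereas the paper's version glosses over this (it writes $\arctan=\arg$ and works $\bmod\ 2\pi$, which strictly speaking requires $a_j>0$ or an extra factor of $2$ in the exponent).
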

In \cite{Calcut}, Calcut showed that ``Lemma \ref{lem:Calcut_Z^2} 
$\Rightarrow$ Corollary \ref{cor:Calcut_Z^2_1} and \ref{cor:Calcut_Z^2_2}". 
Here, we show that ``Corollary \ref{cor:Calcut_Z^2_1} $\Rightarrow$ 
Lemma \ref{lem:Calcut_Z^2}" and ``Corollary \ref{cor:Calcut_Z^2_2} 
$\Rightarrow$ Lemma \ref{lem:Calcut_Z^2}". 
Therefore, these three statements are equivalent to one another. \\
{\it Proof of ``Corollary \ref{cor:Calcut_Z^2_1} $\Rightarrow$ 
Lemma \ref{lem:Calcut_Z^2}}". 
Let $z=a+b\sqrt{-1}$. If $z^{n}$ is real then 
\[
(a+b\sqrt{-1})^n\in\RR \Rightarrow n\arg (a+b\sqrt{-1})=k\pi 
\Rightarrow \arg (a+b\sqrt{-1})=\frac{k\pi}{n}
\]
Because of Corollary \ref{cor:Calcut_Z^2_1}, 
the rational values of $\tan k\pi/n$ are $0$ and $\pm 1$. 
Therefore, $\arg z$ is a multiple of $\pi/4$.
If $\arg z$ is a multiple of $\pi/4$ then 
$z^4\in\RR$. 
This complete the proof of Lemma \ref{lem:Calcut_Z^2}. 
\e \\ 
{\it Proof of ``Corollary \ref{cor:Calcut_Z^2_2} $\Rightarrow$ 
Lemma \ref{lem:Calcut_Z^2}}". 
Corollary \ref{cor:Calcut_Z^2_1} is the special case of 
Corollary \ref{cor:Calcut_Z^2_2}. 
\e 
\section{Nonexistence of the spherical designs}\label{section:non}
In this section, we study the nonexistence of the spherical designs. 
First, we introduce some notation. 
Let $K$ be an imaginary quadratic field and 
$L$ be a lattice corresponding to $\OOO_K$. 
Then, for 
$m=p_1^{a_1} \cdots p_s^{a_s}
q_{s+1}^{a_{s+1}}\cdots q_u^{a_u}
r_{u+1}^{a_{u+1}}\cdots r_v^{a_v}$, 
where $(d_K/p_i)=1$, $(d_K/q_i)=-1$ and $r_i\vert d_K$ 
we define sets as follows: 
\begin{equation}\label{eqns:sets}
\left\{
\begin{array}{ll}
X(L_m)&:=
\{(a+b\theta_d)/\sqrt{m}\mid (a+b\theta_d)\in \OOO_K, N(a+b\theta_d)=m\}, 
\vspace{3pt}
\\ 
\vspace{3pt}
&\simeq 
\{x/\sqrt{m}\mid x\in L, (x,x)=m\} 
\vspace{3pt}
\\
\vspace{3pt}
X(L_m)_{p_k}&:=
\{(a+b\theta_d)/\sqrt{p_k^{a_k}}\mid (a+b\theta_d)\in \OOO_K, N(a+b\theta_d)=p_k^{a_k}\}, 
\vspace{3pt}
\\
\vspace{3pt}
&\simeq 
\{x/\sqrt{p_k^{a_k}}\mid x\in L, (x,x)=p_k^{a_k}\} 
\vspace{3pt}
\\
\vspace{3pt}
W(L_m)&:=
\{\arg (a+b\theta_d)\mid (a+b\theta_d)\in \OOO_K, N(a+b\theta_d)=m\}, 
\vspace{3pt}
\\
\vspace{3pt}
W(L_m)_{p_k}&:=
\{\arg (a+b\theta_d)\mid (a+b\theta_d)\in \OOO_K, N(a+b\theta_d)=p_k^{a_k}\}. 
\end{array}
\right.
\end{equation}
For a lattice $L$, we define functions as follows: 
\begin{equation}
\left\{
\begin{array}{ll}\label{func:sets}
I_{L_m}(t)&:=\displaystyle\frac{1}{|X(L_m)|}\sum_{x\in X(L_m)}x^t\\
I_{L_m,p_k}(t)&:=\displaystyle\frac{1}{|X(L_m)_{p_k}|}
\sum_{x\in X(L_m)_{p_k}}x^{t}. 
\end{array}
\right.
\end{equation}
Because of Proposition \ref{prop:R^2}, we remark that 
$L_m$ is a spherical $t$-design if and only if 
$I_{L_m}(k)=0$ for all $k\in\{1,\ldots,t\}$. 
Then, it is well-known that the following theorem: 
\begin{thm}[cf.~\cite{{Pache},{Toy-BM},{Toy-BM2}}]
Let $L$ be a $2$-dimensional Euclidean lattice. 
Then, for any positive integer $m$, 
$L_m$ is a spherical $1$-design if $L_m \neq \emptyset$. 
Moreover, $(\ZZ^{2})_m$ $($resp.\ $(A_2)_m$$)$ 
is a spherical $3$-design $($resp.\ $5$-design$)$ 
if $(\ZZ^{2})_m \neq \emptyset$ $($resp.\ $(A_2)_m \neq \emptyset$$)$. 
\end{thm}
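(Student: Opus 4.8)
The plan is to reduce everything, via Proposition \ref{prop:R^2}, to a statement about power sums over a lattice shell. Identify $\RR^2$ with $\CC$; then, after rescaling to the unit circle, a nonempty shell $L_m$ is a spherical $t$-design exactly when $\sum_{x\in X(L_m)} x^k = 0$ for every $k\in\{1,\ldots,t\}$, where $X(L_m)\subset S^1$ consists of the complex numbers $x/\sqrt m$ with $x$ a lattice point satisfying $(x,x)=m$. The main device is elementary: if $\rho\in\CC$ has $|\rho|=1$ and $\rho L=L$, then $x\mapsto\rho x$ is a bijection of $X(L_m)$, so $\sum_x x^k=\sum_x(\rho x)^k=\rho^k\sum_x x^k$, whence $(1-\rho^k)\sum_x x^k=0$; in particular the $k$-th power sum vanishes whenever $\rho^k\neq 1$.

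For the first assertion I would take $\rho=-1$, which stabilizes every lattice: this forces $\sum_x x^k=0$ for every odd $k$, in particular for $k=1$, so every nonempty shell of a $2$-dimensional lattice is a spherical $1$-design (and the odd exponents $k=1,3$ are thereby already settled for $\ZZ^2$).

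For $\ZZ^2$ I would identify $(\ZZ^2)_m$ with the Gaussian integers $a+b\sqrt{-1}\in\ZZ[\sqrt{-1}]$ of norm $m$; since $\sqrt{-1}\cdot\ZZ[\sqrt{-1}]=\ZZ[\sqrt{-1}]$ and $|\sqrt{-1}|=1$, applying the device with $\rho=\sqrt{-1}$ and $k=2$ gives $\sum_x x^2=0$, which together with the odd cases $k=1,3$ shows $(\ZZ^2)_m$ is a spherical $3$-design. For $A_2$ I would identify the hexagonal lattice, up to a similarity of $\RR^2$, with the ring of Eisenstein integers $\OOO_{\QQ(\sqrt{-3})}=\ZZ+\ZZ\,\zeta$, where $\zeta=(1+\sqrt{-3})/2=e^{\pi\sqrt{-1}/3}$ is a primitive sixth root of unity; this ring is stable under multiplication by $\zeta$ and $|\zeta|=1$, so the device with $\rho=\zeta$ yields $(1-\zeta^k)\sum_x x^k=0$ for all $k$, and since $\zeta^k\neq 1$ for $1\le k\le 5$ we conclude that $(A_2)_m$ is a spherical $5$-design. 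Here $\zeta$, being of order $6$, already subsumes the step $\rho=-1$ since $-1=\zeta^3$.

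There is no genuine obstacle here; the only point needing care is making the identifications precise, namely that the $\ZZ^2$- and $A_2$-lattices are (similar to) the rings of integers of $\QQ(\sqrt{-1})$ and $\QQ(\sqrt{-3})$, whose unit groups contain the fourth and sixth roots of unity respectively. This is exactly the lattice/quadratic-field dictionary recorded in Theorem \ref{thm:lattice} and the discussion following it. One may alternatively phrase each symmetry argument by partitioning the shell into orbits of the relevant cyclic rotation group and summing $x^k$ over each orbit (a geometric series that vanishes), but the fixed-point identity $(1-\rho^k)\sum_x x^k=0$ is the shortest route.
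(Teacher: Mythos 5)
Your argument is correct, and it is a complete, self-contained proof of the quoted theorem. Note, however, that the paper itself does not prove this statement; it is quoted as well known with references to Pache and Bannai--Miezaki, where it is obtained either from the invariant theory of the full automorphism group of the lattice (a shell is a union of orbits of ${\rm Aut}(L)$, and the dihedral groups of orders $8$ and $12$ admit no nonzero harmonic invariants in degrees $1\le j\le 3$, resp.\ $1\le j\le 5$), or from the theory of modular forms (the theta series of $\ZZ^{2}$, resp.\ $A_2$, weighted by a harmonic polynomial of degree $j\ge 1$ is a cusp form of weight $1+j$ for the relevant congruence group, and these spaces vanish for the degrees in question). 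Your computation with a single rotation $\rho$ of order $4$ (resp.\ $6$), giving $(1-\rho^{k})\sum_{x}x^{k}=0$ via Proposition \ref{prop:R^2}, is in effect the cyclic-subgroup special case of the invariant-theoretic argument, carried out by hand; it is more elementary than either route in the cited references and fits the modular-form-free spirit of the paper. What the references' approaches buy in exchange is generality: the invariant-theoretic statement applies to any lattice and any degree for which the automorphism group has no harmonic invariants (e.g.\ it gives the $7$-design property for shells of $E_8$), and the modular-form method in addition controls exactly when the next degree fails. The only points you should make explicit are the ones you already flag: the identification of $(\ZZ^2)_m$ and $(A_2)_m$, up to similarity, with the norm-$m$ elements of $\ZZ[\sqrt{-1}]$ and of $\ZZ+\ZZ\,(1+\sqrt{-3})/2$ (Theorem \ref{thm:lattice}), the invariance of the design property under rescaling, and the fact that multiplication by a unit of modulus one preserves the norm and hence permutes each shell.
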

\subsection{The case of $\ZZ^{2}$-lattice}\label{section:non_Z^2}
Let $\varphi_k\in (0, 2\pi)$ be the minimum argument 
$z\in\OOO_{\QQ(\sqrt{-1})}$ whose norm is $p_k$, where $(d_K/p_k)=1$. 
Then, we have the following lemma: 
\begin{lem}\label{lem:mul_prime_power}
Let the notation be the same as above. 
Then, for a prime number $p_k$, where $(d_K/p_k)=1$, 
\[
W(\ZZ^2_{m})_{p_k}=
\left\{
\begin{array}{l}
\displaystyle\{0,\pm 2\varphi_k, \pm 4\varphi_k,\ldots, \pm a_k\varphi_k\}\oplus \{\frac{\ell\pi}{2}\,(0\leq \ell <4)\}, 
\mbox{ if $a_k$ is even, }\\
\displaystyle\{\pm \varphi_k, \pm 3\varphi_k,\ldots, \pm a_k\varphi_k\}\oplus 
\{\frac{\ell\pi}{2}\,(0\leq \ell <4)\}, \mbox{ if $a_k$ is odd, }
\end{array}
\right.
\]
where denote by ``$\{a,\ldots\}\oplus 
\{\frac{\ell\pi}{2}\,(0\leq \ell <4)\}$" 
the set $\{a, a+\pi/2, a+\pi, a+3\pi/2,\ldots\}$ 
and $W(\ZZ^{2}_{m})_{p_k}$ is defined in {\rm (\ref{eqns:sets})}. 
In particular, $\vert W((\ZZ^{2})_{m})_{p_k}\vert=4(1+a_k)$. 
\end{lem}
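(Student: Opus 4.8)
\textbf{Proof proposal for Lemma~\ref{lem:mul_prime_power}.}
The plan is to analyze the integral ideals of norm $p_k^{a_k}$ in $\ZZ[\sqrt{-1}]$, translate them into Gaussian integers, and then read off the arguments. Since $(d_K/p_k)=1$, Theorem~\ref{thm:facprime} gives a splitting $(p_k)=P\overline{P}$ with $P\neq\overline{P}$. Because $\ZZ[\sqrt{-1}]$ is a PID, write $P=(\pi_k)$ for a Gaussian prime $\pi_k$ with $N(\pi_k)=p_k$; I will normalize $\pi_k$ so that $\varphi_k:=\arg\pi_k\in(0,2\pi)$ is the minimal argument among the eight associates $u\pi_k$ and $u\overline{\pi_k}$ ($u\in\{\pm1,\pm\sqrt{-1}\}$) together with their conjugates — concretely $\varphi_k$ is the argument of the one lying in the open first octant. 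By Proposition~\ref{prop:NUM} (or its proof), the integral ideals of norm $p_k^{a_k}$ are exactly $P^{j}\overline{P}^{\,a_k-j}$ for $0\le j\le a_k$, i.e.\ the $a_k+1$ ideals $(\pi_k^{j}\overline{\pi_k}^{\,a_k-j})$.

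Next I would pass from ideals to elements. Each such ideal has $|\ZZ[\sqrt{-1}]^{\times}|=4$ generators, namely $\pi_k^{j}\overline{\pi_k}^{\,a_k-j}$ multiplied by the four units $1,\sqrt{-1},-1,-\sqrt{-1}$, whose arguments are $0,\pi/2,\pi,3\pi/2$. This accounts for the $\oplus\{\frac{\ell\pi}{2}\,(0\le\ell<4)\}$ factor and for the count $|W((\ZZ^2)_m)_{p_k}|=4(a_k+1)$, once I check the arguments $\arg(\pi_k^{j}\overline{\pi_k}^{\,a_k-j})$ are pairwise distinct modulo $\pi/2$. Writing $\arg\pi_k=\varphi_k$ and $\arg\overline{\pi_k}=-\varphi_k$, we get
\[
\arg\bigl(\pi_k^{j}\overline{\pi_k}^{\,a_k-j}\bigr)\equiv j\varphi_k-(a_k-j)\varphi_k=(2j-a_k)\varphi_k \pmod{2\pi}.
\]
As $j$ runs over $0,1,\dots,a_k$, the integer $2j-a_k$ runs over $-a_k,-a_k+2,\dots,a_k-2,a_k$: this is precisely $\{0,\pm2\varphi_k,\dots,\pm a_k\varphi_k\}$ when $a_k$ is even and $\{\pm\varphi_k,\pm3\varphi_k,\dots,\pm a_k\varphi_k\}$ when $a_k$ is odd, matching the claimed description of $W((\ZZ^2)_m)_{p_k}$.

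The main obstacle is the distinctness claim: I must rule out that two of the values $(2j-a_k)\varphi_k$ coincide modulo $\pi/2$, which would collapse the set and break the count $4(a_k+1)$. Equivalently, I need that no nonzero even (or odd, according to parity) multiple of $\varphi_k$ is congruent to $0$ modulo $\pi/2$, and more generally that $2t\varphi_k\not\equiv 0\pmod{\pi/2}$ for $0<|t|\le a_k$. This is exactly where Calcut's results enter: if some nonzero integer multiple $c\varphi_k$ were a multiple of $\pi/4$, then since $\varphi_k=\arctan(b/a)$ for the Gaussian integer $\pi_k=a+b\sqrt{-1}$ with $a,b\neq 0$, Corollary~\ref{cor:Calcut_Z^2_2} (with $l=1$) forces $\tan\varphi_k\in\{0,\pm1\}$, hence $b/a\in\{0,\pm1\}$, contradicting $N(\pi_k)=p_k$ being an odd prime (it would be $0$, or $2a^2$). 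Thus $\varphi_k$ is an irrational multiple of $\pi$ not of the form $s\pi/4$, the values $(2j-a_k)\varphi_k$ are genuinely distinct modulo $2\pi$ and remain distinct after adding the coset $\{0,\pi/2,\pi,3\pi/2\}$ (one checks that a collision would again give a multiple of $\pi/2$ equal to an integer multiple of $\varphi_k$), and the stated equality of sets and the cardinality $4(1+a_k)$ follow.
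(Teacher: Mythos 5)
Your proof is correct and follows essentially the same route as the paper's: both obtain the arguments as $(2j-a_k)\varphi_k$ plus unit rotations by $\pi/2$ from the factorization $P^{j}\overline{P}^{\,a_k-j}$, and both settle the crucial distinctness modulo $\pi/2$ by invoking Calcut's results (you via Corollary~\ref{cor:Calcut_Z^2_2}, the paper via Lemma~\ref{lem:Calcut_Z^2}, which are shown equivalent) together with the fact that $p_k$ is an odd split prime, so $\varphi_k$ cannot be a multiple of $\pi/4$. Your write-up is merely more explicit about the ideal-to-generator bookkeeping, which the paper leaves implicit.
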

\begin{proof}
If $a_k$ is even then the values of the arguments $W(\ZZ^{2}_{m})_{p_k}$ 
are $\pm \varphi_k\pm\cdots\pm \varphi_k\pmod{\pi/2}$, that is, 
one of the elements of the following set: 
$\{0,$$\pm 2\varphi_k$
$, \pm 4\varphi_k,\ldots$
$, \pm a_k\varphi_k\}$$ \pmod{\pi/2}$. 
Then, we assert that if $s\neq s^{\prime}$ then 
$s\varphi_k\not \equiv s^{\prime}\varphi_k\pmod{\pi/2}$. 
It is because if not then $\varphi_k=n\pi/(2t)$ for some $n$, $t\in\NN$ 
and $z^{2t}\in \RR$. 
However, because of Lemma \ref{lem:Calcut_Z^2}, 
$\varphi_k$ is a multiple of $\pi/4$. 
Then, 
if $\varphi_k=\pi/4$ then $z=c(1+\sqrt{-1})$ for some $c\in\RR$ and 
$p_k=2c^2$. 
This is a contradiction since $p_k$ is a prime number 
congruent to $1$ modulo $4$. 
If $\varphi_k=\pi/2$ then $z=c(\sqrt{-1})$ for some $c\in\RR$ and 
$p_k=c^2$. 
This is a contradiction since $p_k$ is a prime number. 
For the other cases, 
we can obtain a contradiction similarly. 
Therefore, we obtain $\vert W((\ZZ^{2})_{m})_{p_k}\vert=4(1+a_k)$. 
In case that $a_k$ is odd, it can be proved similarly. 
\end{proof}
\subsubsection{The proof using Calcut's result}\label{section:non_Z^2_1}
In this subsection, 
we prove Theorem \ref{thm:Z^2} using Calcut's results. 
%We denote by $a(m)$ the coefficients of the theta series of 
%$\ZZ^{2}$-lattice:
%\[
%\Theta_{\ZZ^{2}}(q)=\sum_{i=0}^{\infty}a(i)q^i, 
%\]
%and set $a^{\prime}(m):=a(m)/4$. 
%Then, 
%because of Proposition \ref{thm:mul} and \ref{prop:NUM} 
%the function $a^{\prime}(m)$ is multiplicative and 
%when $m$ is prime power $a^{\prime}(p^e)=N(p^e)$. 
%Therefore, 
%for $m=p_1^{a_1} \cdots p_s^{a_s}q_{s+1}^{a_{s+1}}\cdots q_u^{a_u}2^{c}$, 
%where $p_i\equiv 1\pmod{4}, q_i\equiv 3\pmod{4}$, 
%\begin{equation}\label{eqn:key}
%a(m)=4(1+a_1)\cdots (1+a_s). 
%\end{equation}
For $m=p_1^{a_1} \cdots p_s^{a_s}q_{s+1}^{a_{s+1}}\cdots q_u^{a_u}$, 
where $p_i\equiv 1\pmod{4}$ and $q_i\equiv 3\pmod{4}$ 
we define the sets in (\ref{eqns:sets}). 
We denote by $(a+b\sqrt{-1})(a-b\sqrt{-1})$, 
where $b>0$, 
the prime ideal factorization of $(p_i)$. 
Then, we denote by $P_i$ (resp.\ $\overline{P_i}$) 
the prime ideal $(a+b\sqrt{-1})$ (resp. $(a-b\sqrt{-1})$). 
Here, we show that 
%for $m^{\prime}=p_1^{a_1} \cdots p_s^{a_s}$ 
\begin{equation}\label{eqn:mul_Z^2_1}
\frac{\vert W((\ZZ^{2})_{m})\vert}{4}
=\frac{\vert W((\ZZ^{2})_{m})_{p_1}\vert}{4} \cdots \frac{\vert W((\ZZ^{2})_{m})_{p_s}\vert}{4}. 
\end{equation}
If not, $m_1\varphi_1+\cdots+m_s\varphi_s 
\equiv m^{\prime}_1\varphi_1+\cdots+m^{\prime}_s\varphi_s \pmod{\pi/2}$, 
namely, 
$(m_1-m^{\prime}_1)\varphi_1+\cdots+(m_s-m^{\prime}_s)\varphi_s 
\equiv 0 \pmod{\pi/2}$. 
Therefore, there exist $a^{\prime}_1,\ldots,a^{\prime}_s$ such that 
$(z):=P_1^{a^{\prime}_1}\cdots P_s^{a^{\prime}_s}$ and $z^2\in\RR$. 
Then, $\overline{P_i}$ is equal to $P_j$ for some $j$ 
because $z^2\in\RR$ and $p_i\equiv 1\pmod{4}$. 
This is a contradiction since $\{P_i\}_{i=1}^{s}$ are 
the prime ideals with the different norms. 
%Here, we introduce Calcut's result: 
%\begin{cor}\label{cor:Calcut_Z^2}
%Let $z_i=a+b\sqrt{-1} \in \ZZ[\sqrt{-1}]$. 
%If 
%\[
%\displaystyle\frac{k\pi}{n}
%=\sum_{m=1}^{l}m_j\arctan\frac{b_m}{a_m}, 
%\]
%holds, where all variables are rational integers, then 
%$k\pi/n = j\pi/4$ for some integer $j$. 
%\end{cor}
%Because of Corollary \ref{cor:Calcut_Z^2}, 
%$k\pi/2$ is a multiple of $\pi/4$. 
%it is impossible since $2$ is not a prime factor of $m$. 
%This completes the proof of equation (\ref{eqn:mul_Z^2}). 

Then, we obtain the following equation: 
\begin{eqnarray}\label{eqn:design}
I_{(\ZZ^{2})_{m}}(t)=\frac{1}{|X((\ZZ^{2})_{m})|}\sum_{x\in X((\ZZ^{2})_{m})}x^t \nonumber
&=&\frac{1}{|X((\ZZ^{2})_{m})|}\sum_{x\in X((\ZZ^{2})_{m})}e^{it\arg (x)}\\
&=&\prod_{k=1}^{s}\frac{1}{|X((\ZZ^{2})_{m})_{p_k}|}\sum_{x\in X((\ZZ^{2})_{m})_{p_k}}e^{it\arg (x)}\nonumber\\
&=&\prod_{k=1}^{s}I_{(\ZZ^{2})_{m},p_k}(t). 
\end{eqnarray}
Let $\varphi_k\in (0, 2\pi)$ be the minimum argument 
$z\in\OOO_{\QQ(\sqrt{-1})}$ whose norm is $p_k$. 
Then, 
\begin{equation}\label{eqn:Lehmer_Z^2}
I_{(\ZZ^{2})_{m},p_k}(4)=\frac{\sin (4(1+a_k)\varphi_k)}{(1+a_k)\sin (4\varphi_k)} 
\end{equation}
since the following equations hold: 
\begin{equation}\label{eqn:cos}
\left\{
\begin{array}{l}
\vspace{5pt}
\displaystyle
1+2\cos(2x)+2\cos(4x)+\cdots +2\cos(2kx)=\frac{\sin((2k+1)x)}{\sin x}\\ 
\displaystyle
\cos x+\cos(3x)+\cos(5x)+\cdots +\cos((2k-1)x)=\frac{\sin(2kx)}{2\sin x}. 
\end{array}
\right.
\end{equation}
If $I_{(\ZZ^{2})_{m},p_k}(4)=0$, namely, 
$4\varphi_k = n\pi /(1+a_k)$ for some $n\in \ZZ$, 
then because of Corollary \ref{cor:Calcut_Z^2_1}, 
$\tan (4\varphi_k) = \pm 1$, namely, 
$\varphi_k =\pi /16$, $3\pi /16$, $5\pi /16$ and $7\pi /16$. 
Then, $\tan \varphi_k$ is an irrational number. 
On the other hand, 
$\varphi_k$ is the argument of 
$\ZZ[\sqrt{-1}]$, hence a rational number. 
This is a contradiction. 
%Then, there exist $z\in\OOO_K$ and $c\in \RR$ such that 
%$z=c(1+i)$ and $2c^2=p_k^{a_k}$. 
%This is contradiction since $p_k$ is a prime number 
%congruent to $1$ modulo $4$. 
Therefore $(\ZZ^{2})_m$ is not a spherical $4$-design. 

\subsubsection{The proof using multiplicative property}\label{section:non_Z^2_2}
In this subsection, 
we reprove Theorem \ref{thm:Z^2} using the multiplicative property 
Proposition \ref{thm:mul}. 
We denote by $a(m)$ the $m$-th coefficient of the theta series of 
the $\ZZ^{2}$-lattice:
\[
\Theta_{\ZZ^{2}}(q)=\sum_{i=0}^{\infty}a(i)q^i, 
\]
and set $a^{\prime}(m):=a(m)/4$. 
Then, 
because of Proposition \ref{thm:mul} and \ref{prop:NUM} 
the function $a^{\prime}(m)$ is multiplicative and 
when $m$ is a prime power, $a^{\prime}(p^e)=F(p^e)$. 
Therefore, 
for $m=p_1^{a_1} \cdots p_s^{a_s}q_{s+1}^{a_{s+1}}\cdots q_u^{a_u}2^{c}$, 
where $p_i\equiv 1\pmod{4}$ and $q_i\equiv 3\pmod{4}$, 
\begin{equation}\label{eqn:key_1}
a(m)=4(1+a_1)\cdots (1+a_s). 
\end{equation}
For $m=p_1^{a_1} \cdots p_s^{a_s}q_{s+1}^{a_{s+1}}\cdots q_u^{a_u}$, 
where $p_i\equiv 1\pmod{4}$ and $q_i\equiv 3\pmod{4}$ 
we define the sets in (\ref{eqns:sets}). 
Because of the equation (\ref{eqn:key_1}), the following equation holds: 
\[
\frac{\vert W((\ZZ^{2})_{m})\vert}{4}
=\frac{\vert W((\ZZ^{2})_{m})_{p_1}\vert}{4} \cdots \frac{\vert W((\ZZ^{2})_{m})_{p_s}\vert}{4}. 
\]
Then, as we obtained equation (\ref{eqn:design}), 
we obtain the following equation: 
\begin{eqnarray*}
I_{(\ZZ^{2})_{m}}(t)=\prod_{k=1}^{s}I_{(\ZZ^{2})_{m},p_k}(t). 
\end{eqnarray*}
Let $\varphi_k\in (0, 2\pi)$ be the minimum argument 
$z\in\OOO_{\QQ(\sqrt{-1})}$ whose norm is $p_k$. 
Then, 
\[
I_{(\ZZ^{2})_{m},p_k}(4)=\frac{\sin(4(1+a_k)\varphi_k)}{(1+a_k)\sin (4\varphi_k)} 
\]
since equation (\ref{eqn:cos}) holds. 
Let $\alpha$ be the least value of $a_k$
for which $I_{(\ZZ^{2})_{m},p_k}(4)=0$. 
If we assume that $\alpha >1$ then 
\[
\frac{\sin(4(1+\alpha)\varphi_{k})}{(1+\alpha)\sin (4\varphi_{k})}=0, 
\]
that is, $4\varphi_k =n\pi /(1+\alpha)$ for some $n\in \ZZ$. 
On the other hand, for $a_k=1$ 
\begin{eqnarray}
\frac{\sin(8\varphi_k)}{2\sin (4\varphi_k)}&=&\cos (4\varphi_k) \nonumber \\
&=&8\cos^4 (\varphi_k) - 8\cos^2 (\varphi_k) +1. \label{eqn:cos_2}
\end{eqnarray}
Here, we set $z:=2\cos (4\varphi_k)$. 
The number $z$ being twice the cosine of a rational multiple of $2 \pi$, 
is an algebraic integer. 
Moreover, if we set $e^{i(\varphi_k)}:=a+b\sqrt{-1}$ then 
$\cos (\varphi_k)=a/\sqrt{p_k}$ and 
because of the equation (\ref{eqn:cos_2}), $z$ is a rational number, 
namely, a rational integer. 
Therefore, $z=\pm 1$ or $z=\pm 2$. 
If $z=\pm 1$ then $\varphi_k=\pi/12$, $2\pi/12$, $4\pi/12$ or $5\pi/12$. 
However, $\tan \varphi_k$ is an irrational number 
and $b/a$ is a rational number. This is a contradiction. 
If $z=\pm 2$ then $\varphi_k=0$ or $\pi/4$, 
that is, $|X((\ZZ^{2})_m)_{p_k}|=4$. 
However, $|X((\ZZ^{2})_m)_{p_k}|=8$ since $p_k\equiv 1\pmod{4}$. 
This is a contradiction. 

Hence, it is enough to show that 
when $\alpha=1$, $I_{(\ZZ^{2})_m,p_k}(4)\neq 0$. 
If 
\[
I_{(\ZZ^{2})_m,p_k}(4)=\cos (4\varphi_k) =0 
\]
then $\varphi_k =\pi/8$ or $3\pi/8$. 
However, $\tan \pi/8$ and $\tan 3\pi/8$ are irrational numbers 
and $b/a$ is a rational number. 
This is a contradiction. 
Therefore $(\ZZ^{2})_m$ is not a spherical $4$-design. 
For $m^{\prime}=2^cm$, 
$W((\ZZ^{2})_{m^{\prime}})$ is rotated $k\pi$ for some $k\in\{0,1,2,3\}$ 
from $W((\ZZ^{2})_m)$. 
Therefore $(\ZZ^{2})_{m^{\prime}}$ is not a spherical $4$-design. 

\subsection{Calcut's results and the multiplicative property}\label{subsection:mul}
In Section \ref{section:non_Z^2_1} and \ref{section:non_Z^2_2}, 
we showed that the case of the $\ZZ^{2}$-lattice using Calcut's result and 
the multiplicative property of the 
Fourier coefficients of the theta series associated with 
the $\ZZ^{2}$-lattice respectively. 
In this section, 
we show that Culcut's result is essentially equivalent to 
the multiplicative property (\ref{eqn:mul_Z^2_1}) of the theta series: 
\[
\Theta_{\ZZ^{2}}(q)=\sum_{m=0}^{\infty} \vert W((\ZZ^{2})_{m})\vert q^{m}. 
\]
In Lemma \ref{lem:mul_prime_power} and Section \ref{section:non_Z^2_1}, 
we showed that the multiplicative property (\ref{eqn:mul_Z^2_1}) 
using Calcut's result. 

On the other hand, we assume that the multiplicative property, 
namely, equation (\ref{eqn:mul_Z^2_1}). 
Let $z\in\ZZ[\sqrt{-1}]$ be a Gaussian integer 
such that $\arg z\not\in \{0, \pm\pi/4, \pm\pi/2, \pm 3\pi/4, \pi\}$ 
and 
let $N((z))=p_1^{a_1}\cdots p_s^{a_s}
q_{s+1}^{a_{s+1}}\cdots q_u^{a_u}2^{a_{u+1}}$, 
where $(d_K/p_i)=1$ and $(d_K/q_i)=-1$ and 
let $(z)=P_1^{a_1}\cdots P_s^{a_s}
Q_{s+1}^{a_{s+1}}\cdots Q_u^{a_u}(1+\sqrt{-1})^{a_{u+1}}$
be the prime ideal factorization, 
where $N(P_i)=p_i$ and $N(Q_i)=q_i^2$. 
Then, $(z)^n$ 
and 
$\overline{(z)^n}$ 
are ideals of norm $N(z)^n$ 
because $P_i$ and $\overline{P_i}$ 
are prime ideals of norm $p_i$. 
We assume that $z^n\in\RR$. 
Then, we have $(z)^n=\overline{(z)^n}$. 
Therefore, the number of the nonequivalent ideals of 
norm $N(z)^{n}$ is less than $(1+a_1)\cdots(1+a_s)$. 
This is a contradiction since because of the multiplicative property, 
the number of the nonequivalent ideals of 
norm $N(z)^{n}$ is $(1+a_1)\cdots(1+a_s)$. 
Hence, the multiplicative property 
is equivalent to Calcut's result, namely, Lemma \ref{lem:Calcut_Z^2}. 

\subsection{The general cases whose class number is $1$}
In this subsection, 
we prove Theorem \ref{thm:A_2} and \ref{thm:gen} 
without using Calcut's result. 
\subsubsection{The case of $A_2$-lattice}\label{section:non_A_2}
We denote by $a(m)$ the $m$-th coefficient of the theta series of 
\[
\Theta_{A_{2}}(q)=\sum_{i=0}^{\infty}a(i)q^i . 
\]
and set $a^{\prime}(m):=a(m)/6$. 
Then, the function $a^{\prime}(m)$ is multiplicative and 
when $m$ is a prime power, $a^{\prime}(p^e)=F(p^e)$. 
Therefore, 
for $m=p_1^{a_1} \cdots p_s^{a_s}$$q_{s+1}^{a_{s+1}}$
$\cdots$$q_u^{a_u}3^{c}$, 
where $p_i\equiv 1\pmod{3}$ and $q_i\equiv 2\pmod{3}$, 
\begin{equation}\label{eqn:key_A_2}
a(m)=6(1+a_1)\cdots (1+a_s). 
\end{equation}

For $m=p_1^{a_1} \cdots p_s^{a_s}q_{s+1}^{a_{s+1}}\cdots q_u^{a_u}$, 
where $p_i\equiv 1\pmod{3}$ and $q_i\equiv 2\pmod{3}$ 
we define the sets in (\ref{eqns:sets}). 
Let $I_{{(A_{2})}_{m}}(t)$ and $I_{{(A_{2})}_{m}, p_k}(t)$ be the functions defined by (\ref{func:sets}). 
Because of the equation (\ref{eqn:key_A_2}), the following equation holds: 
\[
\frac{\vert W((A_2)_{m})\vert}{6}
=\frac{\vert W((A_2)_{m})_{p_1}\vert}{6} \cdots 
\frac{\vert W((A_2)_{m})_{p_s}\vert}{6}. 
\]
Then, as we obtained the equation (\ref{eqn:design}), 
we obtain the following equation: 
\begin{eqnarray*}
I_{(A_2)_m}(t)=\prod_{k=1}^{s}I_{(A_2)_m,p_k}(t). 
\end{eqnarray*}
Let $\varphi_k \in (0,2\pi)$ be the minimum argument 
$z\in\OOO_{\QQ(\sqrt{-3})}$ whose norm is $p_k$. 
Then, 
\begin{equation}\label{eqn:Lehmer_A_2}
I_{(A_2)_m,p_k}(6)=\frac{\sin(6(1+a_k)\varphi_k)}{(1+a_k)\sin (6\varphi_k)} 
\end{equation}
since the equation (\ref{eqn:cos}) holds. 
Let $\alpha$ be the least value of $a_k$
for which $I_{(A_2)_m,p_k}(6)=0$. 
If we assume that $\alpha >1$ then 
\[
\frac{\sin(6(1+\alpha)\varphi_{k})}{(1+\alpha)\sin (6\varphi_{k})}=0, 
\]
that is, $6\varphi_k =n\pi /(1+\alpha)$ for some $n\in \ZZ$. 
On the other hand, for $a_k=1$ 
\begin{eqnarray}
\frac{\sin(12\varphi_k)}{2\sin (6\varphi_k)}&=&\cos (6\varphi_k) \nonumber \\
&=&32\cos^{6}(\varphi_k)-48\cos^{4}(\varphi_k)+18\cos^{2}(\varphi_k)-1 . 
\label{eqn:cos_A_2}
\end{eqnarray}
Here, we set $z:=2\cos (6\varphi_k)$. 
The number $z$ being twice the cosine of a rational multiple of $2 \pi$, 
is an algebraic integer. 
Moreover, if we set $e^{i(\varphi_k)}=a+b\,\theta_d$, 
where $\theta_d$ is define in (\ref{df:theta_d}) 
then 
$\cos \varphi_k=(a+(b/2))/\sqrt{p_k}$ and 
because of the equation (\ref{eqn:cos_A_2}), $z$ is a rational number, 
namely, a rational integer. 
Therefore, $z=\pm 1$ or $z=\pm 2$. 
If $z=\pm 1$ then $\varphi_k=\pi/18$, $2\pi/18$, $4\pi/18$ or $5\pi/18$. 
However, if $\varphi_k=\pi/18$ or $5\pi/18$ then 
\begin{eqnarray*}
\frac{1}{2}=\sin(3\varphi_k)&=&3\sin(\varphi_k)-4\sin^{3}(\varphi_k)\\
\displaystyle&=&\frac{3\sqrt{3}b(p_k-b^2)}{2p_k\sqrt{p_k}}. 
\end{eqnarray*}
This is a contradiction. 
If $\varphi_k=2\pi/18$ or $4\pi/18$ then 
\begin{eqnarray*}
\pm\frac{1}{2}=\cos(3\varphi_k)&=&4\cos^{3}(\varphi_k)-3\cos(\varphi_k)\\
\displaystyle&=&\frac{((2a+b)^2-3p_k)(2a+b)}{2p_k\sqrt{p_k}}. 
\end{eqnarray*}
This is a contradiction. 
If $z=\pm 2$ then $\varphi_k=0$ or $\pi/6$. 
If $\varphi_k=0$ then $|X((A_2)_m)_{p_k}|=6$. 
However, $|X((A_2)_m)_{p_k}|=12$ since $p_k\equiv 1\pmod{3}$. 
This is a contradiction. 
If $\varphi_k=\pi/6$ then $\sin \varphi_k=1/2$, 
that is, a rational number and 
$\sqrt{3}b/(2\sqrt{p_k})$ is an irrational number 
since $p_k\equiv 1\pmod{3}$. 
This is a contradiction. 

Hence, it is enough to show that when $\alpha=1$, $I_{(A_2)_m,p_k}(6)\neq 0$. 
If 
\[
I_{(A_2)_m,p_k}(6)=\cos (6\varphi_k) =0 
\]
then $\varphi_k =\pi/12$ or $3\pi/12$. 
If $\varphi_k =\pi/12$ then 
\[
\frac{1}{2}=\sin (2(\pi/12))=2\sin (\pi/12)\cos (\pi/12)
=\frac{\sqrt{3}b(2a+b)}{2p_k}. 
\]
If $\varphi_k =3\pi/12$ then 
\[
1=\sin (2(3\pi/12))=2\sin (3\pi/12)\cos (3\pi/12)
=\frac{\sqrt{3}b(2a+b)}{2p_k}. 
\]
These are contradictions since $p_k\equiv 1\pmod{3}$. 
Therefore $(A_2)_m$ is not a spherical $6$-design. 
For $m^{\prime}=3^cm$, 
$W((A_2)_{m^{\prime}})$ is rotated 
$k\pi/3$ for some $k\in \{0,1,\ldots,5\}$ from $W((A_2)_{m})$. 
Therefore $(A_2)_{m^{\prime}}$ is not a spherical $6$-design. 

\subsubsection{The general cases whose class number is $1$}\label{section:non_gen}
%In this section, 
%we prove Theorem \ref{thm:gen}. 
Let $L$ be the lattices whose class number is $1$ 
except for the cases $\ZZ^{2}$- and $A_2$-lattice. 
We denote by $a(m)$ the $m$-th coefficient of the theta series of 
\[
\Theta_{L}(q)=\sum_{i=0}^{\infty}a(i)q^i . 
\]
and set $a^{\prime}(m):=a(m)/2$. 
Then, the function $a^{\prime}(m)$ is multiplicative and 
when $m$ is a prime power, $a^{\prime}(p^e)=F(p^e)$. 
Therefore, 
for $m=p_1^{a_1} \cdots p_s^{a_s}$$q_{s+1}^{a_{s+1}}\cdots q_u^{a_u}$
$r_{u+1}\cdots r_{v}^{a_{v}}$, 
where $(d_{K}/p_i)= 1$, $(d_K/q_i)= -1$, and $r_i\mid d_K$, 
\begin{equation}\label{eqn:key_gen}
a(m)=2(1+a_1)\cdots (1+a_s). 
\end{equation}

For $m=p_1^{a_1} \cdots p_s^{a_s}q_{s+1}^{a_{s+1}}\cdots q_u^{a_u}$, 
%where $(d_{K}/p_i)= 1$ and $(d_K/q_i)= -1$ 
we define the sets in (\ref{eqns:sets}). 
Let $I_{L_m}(t)$ and $I_{L_m,p_k}(t)$ be the functions 
defined by (\ref{func:sets}). 
Because of the equation (\ref{eqn:key_gen}), the following equation holds: 
\[
\frac{\vert W(L_{m})\vert}{2}
=\frac{\vert W(L_{m})_{p_1}\vert}{2} \cdots 
\frac{\vert W(L_{m})_{p_s}\vert}{2}. 
\]
Then, as we obtained the equation (\ref{eqn:design}), 
we obtain the following equation: 
\begin{eqnarray*}
I_{L_m}(t)=\prod_{k=1}^{s}I_{L_m,p_k}(t). 
\end{eqnarray*}
Let $\varphi_k \in (0,2\pi)$ be the minimum argument 
$z\in\OOO_{\QQ(\sqrt{-d})}$ whose norm is $p_k$. 
Then, 
\begin{equation}\label{eqn:Lehmer_gen}
I_{L_m,p_k}(2)=\frac{\sin2(1+a_k)\varphi_k}{(1+a_k)\sin 2\varphi_k} 
\end{equation}
since the equation (\ref{eqn:cos}) holds. 
Let $\alpha$ be the least value of $a_k$
for which $I_{L_m,p_k}(2)=0$. 
If we assume that $\alpha >1$ then 
\[
\frac{\sin(2(1+\alpha)\varphi_{k})}{(1+\alpha)\sin (2\varphi_{k})}=0, 
\]
that is, $2\varphi_k =n\pi /(1+\alpha)$ for some $n\in \ZZ$. 
On the other hand, for $a_k=1$ 
\begin{eqnarray}
\frac{\sin(4\varphi_k)}{2\sin (2\varphi_k)}&=&\cos (2\varphi_k) \nonumber \\
&=&2\cos^{2} (\varphi_k)-1 . \label{eqn:cos_gen}
\end{eqnarray}
Here, we set $z:=2\cos (2\varphi_k)$. 
The number $z$ being twice the cosine of a rational multiple of $2 \pi$, 
is an algebraic integer. 
Moreover, if we set $e^{i(\varphi_k)}=a+b\,\theta_d$ then 
$\cos (\varphi_k)=(\mbox{Re}\,(a+b\,\theta_d))/\sqrt{p_k}$ and 
because of the equation (\ref{eqn:cos_gen}), $z$ is a rational number, 
namely, a rational integer. 
Therefore, $z=\pm 1$ or $z=\pm 2$. 
If $z=\pm 1$ then $\varphi_k=\pi/6$, $2\pi/6$, $4\pi/6$ or $5\pi/6$ and 
$\tan \pi/6=1/\sqrt{3}$, $\tan 2\pi/6=\sqrt{3}$, 
$\tan 4\pi/6=-\sqrt{3}$ or $\tan 5\pi/6=-1/\sqrt{3}$. 
However, 
\[
\tan \varphi_k =
\left\{
\begin{array}{ll}
\displaystyle\frac{\sqrt{2}b}{a} &\mbox{ if}\ d=2 \\
\displaystyle\frac{b\sqrt{d}}{2a+b} &\mbox{ otherwise}. 
\end{array}
\right.
\]
This is a contradiction. 
If $z=\pm 2$ then $\varphi_k=0$ or $\pi/2$, 
that is, $a=0$ or $b=0$. 
This is a contradiction since 
$a^2+b^2=p_k$ and $p_k$ is a prime number. 

Hence, it is enough to show that when $\alpha=1$, $I_{L_m,p_k}(2)\neq 0$. 
If 
\[
I_{L_m,p_k}(2)=\cos (2\varphi_k) =0 
\]
then $\varphi_k =\pi/4$ or $3\pi/4$. 
However, it is impossible because 
$\mbox{Re}\,(a+b\,\theta_d)\neq \pm\mbox{Im}\,(a+b\,\theta_d)$. 
Therefore $L_m$ is not a spherical $2$-design. 
For $m^{\prime}=r_{u+1}^{a_{u+1}}\cdots r_v^{a_v}m$, 
where $r_i\mid d_K$, 
$W(L_{m^{\prime}})$ is rotated $k\pi$ for some $k\in\{0,1\}$ from $W(L_{m})$. 
Therefore $L_m$ is not a spherical $2$-design. 

\begin{rem}
We remark that equations (\ref{eqn:Lehmer_Z^2}), 
(\ref{eqn:Lehmer_A_2}) and (\ref{eqn:Lehmer_gen}) 
are essentially same as the equation (2) which appeared in 
page $3$ of \cite{Toy-BM} and 
the equation (5) which appeared in page $5$ of \cite{Toy-BM2}. 
However, the ways to obtain the first three 
equations (\ref{eqn:Lehmer_Z^2}), 
(\ref{eqn:Lehmer_A_2}) and (\ref{eqn:Lehmer_gen}) and 
the others are different from each other. 
After we calculate the right hand side 
of the definition (\ref{func:sets}), 
we obtained the first three equations. 
On the other hand, 
using the recurrence relation of the coefficients of the 
weighted theta series associated with the 
lattice, which is the property of the normalized Hecke eigenform, 
we obtained the others. 
\end{rem}
\section{Generalization of Calcut's results}\label{subsection:calcut}
In section \ref{subsection:calcut}, 
we quote and generalize Calcut's results. 
Let $K$ be an imaginary quadratic field whose class number is $1$. 
\begin{thm}\label{thm:Calcut}
Let $z\neq 0$ be an element of $\OOO_{K}$. 
There is a natural number $n$ such that 
$z^{n}$ is a real number if and only if 
$\arg z$ is a multiple of 
\[
\left\{
\begin{array}{ll}
\pi/4 \mbox{\ \ if } K=\QQ(\sqrt{-1})\\
\pi/6 \mbox{\ \ if } K=\QQ(\sqrt{-3})\\
\pi/2 \mbox{\ \ otherwise}. 
\end{array} 
\right.
\]
\end{thm}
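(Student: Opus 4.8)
The plan is to prove Theorem \ref{thm:Calcut} by exploiting the fact that $K$ has class number $1$, so that the factorization of $z \in \OOO_K$ into prime elements is unique up to units, and to reduce the "real power" condition to a statement about how complex conjugation permutes the prime factors of $(z)$. The backward direction is trivial: if $\arg z$ is a multiple of $\pi/4$, $\pi/6$, or $\pi/2$ respectively, then $z^4$, $z^6$, or $z^2$ (respectively) has argument a multiple of $\pi$, hence is real. So the content is the forward direction.

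For the forward direction, suppose $z^n \in \RR$ for some $n \geq 1$. Then $z^n = \pm \overline{z^n} = \pm \overline{z}^{\,n}$, so $(z)^n = (\overline{z})^n = \overline{(z)}^{\,n}$ as ideals. Since $\OOO_K$ is a PID (class number $1$) and factorization into prime ideals is unique, this forces $(z)$ and $(\overline{z})$ to have the same prime ideal factorization, i.e. $(z) = (\overline{z})$. Now I would use Theorem \ref{thm:facprime} to analyze which rational primes can divide $N(z)$: a prime $P$ above a split prime $p$ (with $(p) = P\overline{P}$, $P \neq \overline{P}$) cannot appear in $(z)$, for otherwise $\overline{P}$ would appear in $\overline{(z)} = (z)$ with a different multiplicity unless $P$ and $\overline{P}$ occur to the same power — but then $(z)$ would be divisible by $(p) = P\overline{P}$, and one could cancel a rational integer factor; iterating, one reduces to the case where $(z)$ is (up to a rational integer, which does not affect $\arg z$) a product of ramified primes and inert primes. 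Inert primes contribute rational integers too. So $\arg z$ equals $\arg$ of a unit times a product of ramified primes $P_i$ with $P_i^2 = (r_i)$.

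The remaining step is to pin down the arguments of units times the ramified generators. The units of $\OOO_K$ are $\{\pm 1\}$ unless $K = \QQ(\sqrt{-1})$ (units $\{\pm 1, \pm i\}$, arguments multiples of $\pi/2$) or $K = \QQ(\sqrt{-3})$ (six units, arguments multiples of $\pi/3$). For a ramified prime, $r_i \mid d_K$ and $(r_i) = P_i^2$, so $P_i = (\pi_i)$ with $\pi_i^2 = u r_i$ for a unit $u$; thus $2\arg \pi_i \equiv \arg u \pmod{2\pi}$, giving $\arg \pi_i \in \frac{1}{2}(\arg u) + \pi\ZZ$. In the generic case the only ramified prime dividing a squarefree $d$ is essentially $\sqrt{-d}$ (argument $\pi/2$) together with possibly $\sqrt{-d}/2$-type elements when $-d \equiv 1 \pmod 4$, whose arguments one checks directly are still $\pi/2 \pmod{\pi}$; combined with units $\{\pm1\}$ this gives multiples of $\pi/2$. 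For $\QQ(\sqrt{-1})$ the ramified prime is $(1+i)$ with argument $\pi/4$, and together with the unit group one gets multiples of $\pi/4$; for $\QQ(\sqrt{-3})$ the ramified prime is $(\sqrt{-3})$ with argument $\pi/2$, and together with the sixth roots of unity one gets multiples of $\pi/6$ (since $\gcd(\pi/2,\pi/3)=\pi/6$).

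The main obstacle I anticipate is bookkeeping rather than conceptual: carefully showing that multiplying or dividing $z$ by rational integers (which arise from collapsing split pairs $P\overline{P}$ and from inert primes) leaves the set of admissible arguments unchanged, and then enumerating the finitely many ramified generators and unit cosets in each of the three cases to confirm the claimed denominators $4$, $6$, $2$. One subtlety worth being explicit about is that for $-d \equiv 1 \pmod 4$ the ramification and the description of $\OOO_K = \ZZ + \ZZ\theta_d$ interact, so the ramified generator is not literally $\sqrt{-d}$ but an associate; checking its argument is the one place a short explicit computation is unavoidable. Given Theorems \ref{thm:lattice}, \ref{thm:facprime} and the class-number-one hypothesis, none of this requires modular forms or Calcut's original argument, consistently with the paper's stated aim.
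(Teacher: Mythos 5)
Your proposal is correct and follows essentially the same route as the paper: both prove the nontrivial direction via unique prime-ideal factorization in the class-number-one ring $\OOO_K$, using the splitting classification (Theorem \ref{thm:facprime}) to show that split and inert primes contribute only rational-integer factors of $z$ (the paper phrases this as a reduction to primitive $z$ plus the observation $\overline{P_i}\mid(z)$, you phrase it as $(z)=\overline{(z)}$ with matching exponents and cancelation), leaving a unit times ramified generators whose arguments give the stated multiples of $\pi/4$, $\pi/6$, $\pi/2$. The only cosmetic difference is that you treat all three cases uniformly, whereas the paper cites Calcut for $\QQ(\sqrt{-1})$ and works out $\QQ(\sqrt{-3})$ explicitly.
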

\begin{proof}
In \cite{Calcut}, 
Calcut show the case $K=\QQ(\sqrt{-1})$. 
Therefore, we show the case $K=\QQ(\sqrt{-3})$ and 
the others can be proved similarly. 

If $\arg z$ is a multiple of $\pi/6$ then 
$z^{6}$ is a real number. 
Assume that $z^{n}=m$, where $z=a+b\theta_d$, 
$\theta_d =(-1+\sqrt{-3})/2$ and $m\in\RR$. 
It is enough to show that $z$ is a nonunit and primitive, 
that is, $\gcd(a,b)=1$. 
Let $(z)=P_1^{a_1}\cdots Q_1^{b_1}\cdots ((3+\sqrt{-3})/2)^{c}$, 
where $(d_K/N(P_i))=1$, 
$N(Q_i)=q_i^2$ and $(d_K/q_i)=-1$, 
be the prime ideal factorization. 
Since $z$ is a primitive, 
we have $b_i=0$. 
Moreover, the condition $z^{n}=m$ implies 
$\overline{P_i}\mid (z)$, that is, 
$P_i\overline{P_i}\mid (z)$. 
Therefore, $a_i=0$ since $z$ is a primitive. 
So, the proof is completed. 
\end{proof}
\begin{cor}
%$$\\$$\vspace{-35pt}
Let $\ZZ+\ZZ\,\theta_d$ be the integer ring of 
an imaginary quadratic field whose class number is $1$. 
%Let $z=a+b\theta \in \ZZ[\theta]$. 
%except for $\ZZ[\sqrt{-1}]$ and $\ZZ[(-1+\sqrt{-3})/2]$. 
\begin{enumerate}
\item 
If $\tan (k\pi/n)=({\rm Im}\,(z))/({\rm Re}\,(z))=b/a$ 
for some $z\in \ZZ+\,\ZZ\sqrt{-1}$ 
then $\tan (k\pi/n)=0$ or $\pm 1$. 

\item 
If $\tan (k\pi/n)=({\rm Im}\,(z))/({\rm Re}\,(z))=\sqrt{3}b/(2a+b)$ 
for some $z\in \ZZ+\ZZ\,(-1+\sqrt{-3})/2$ 
then $\tan (k\pi/n)=0$, $\pm 1/\sqrt{3}$ or $\pm \sqrt{3}$. 

\item 
If $\tan (k\pi/n)=({\rm Im}\,(z))/({\rm Re}\,(z))$ 
for some $z\in \ZZ+\ZZ\,\theta_d$ 
then $\tan (k\pi/n)=0$. 
\end{enumerate}
\end{cor}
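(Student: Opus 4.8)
The plan is to obtain this corollary as an immediate consequence of Theorem~\ref{thm:Calcut}. In each of the three items the hypothesis has the form $\tan(k\pi/n) = \mathrm{Im}(z)/\mathrm{Re}(z)$ for some nonzero $z = a + b\theta_d \in \OOO_K$. First I would observe that this forces $\mathrm{Re}(z)\neq 0$, so that $\arg z$ is well defined with $\arg z \not\equiv \pi/2 \pmod{\pi}$, and $\tan(\arg z) = \mathrm{Im}(z)/\mathrm{Re}(z) = \tan(k\pi/n)$. Since $\tan$ is injective modulo $\pi$ on its domain, this gives $\arg z \equiv k\pi/n \pmod{\pi}$, say $\arg z = k\pi/n + j\pi$ with $j\in\ZZ$; then $n\arg z = (k+jn)\pi$ is an integer multiple of $\pi$, so $z^n$ is real.

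Now Theorem~\ref{thm:Calcut} applies and tells us that $\arg z$ is an integer multiple of $\pi/4$ when $K = \QQ(\sqrt{-1})$, of $\pi/6$ when $K = \QQ(\sqrt{-3})$, and of $\pi/2$ otherwise. Intersecting each of these with the constraint $\arg z \not\equiv \pi/2 \pmod{\pi}$ leaves only finitely many residues modulo $\pi$, and one reads off $\tan(\arg z) = \tan(k\pi/n)$ in each case: $\{0,\pm 1\}$ for $\QQ(\sqrt{-1})$ (from $\arg z \equiv 0,\pm\pi/4$), $\{0,\pm 1/\sqrt 3,\pm\sqrt 3\}$ for $\QQ(\sqrt{-3})$ (from $\arg z \equiv 0,\pm\pi/6,\pm\pi/3$), and $\{0\}$ in the remaining fields (from $\arg z \equiv 0$). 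To finish, I would record the coordinate expressions for $\mathrm{Im}(z)/\mathrm{Re}(z)$ already used in Sections~\ref{section:non_A_2} and~\ref{section:non_gen}: $b/a$ when $\theta_d = \sqrt{-1}$, $\sqrt 3\, b/(2a+b)$ when one writes elements of $\OOO_{\QQ(\sqrt{-3})}$ as $a + b(1+\sqrt{-3})/2$, and $b\sqrt d/(2a+b)$ (resp.\ $\sqrt 2\, b/a$ for $d=2$) in the other class-number-one cases, so that the statement is phrased entirely in terms of $a$ and $b$.

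There is no genuine obstacle here; the corollary is pure bookkeeping once Theorem~\ref{thm:Calcut} is available. The only points requiring a moment of care are: (i) noting that the very form of the hypothesis rules out $\mathrm{Re}(z)=0$, which is exactly what excludes the residue $\pi/2 \pmod\pi$ and hence keeps the lists finite; and (ii) being consistent about the integral basis in the $\QQ(\sqrt{-3})$ case, since the tangent formula $\sqrt 3\, b/(2a+b)$ corresponds to the basis $\{1,(1+\sqrt{-3})/2\}$ rather than $\{1,(-1+\sqrt{-3})/2\}$, so one should either fix the basis or remark that the two descriptions of the ring coincide.
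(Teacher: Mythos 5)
Your proposal is correct and follows exactly the paper's route: observe that the hypothesis forces $z^n\in\RR$, then invoke Theorem~\ref{thm:Calcut} and read off the possible tangent values. You simply spell out the details (injectivity of $\tan$ modulo $\pi$, exclusion of $\arg z\equiv\pi/2\pmod{\pi}$, the basis convention for $\QQ(\sqrt{-3})$) that the paper's two-line proof leaves implicit.
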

\begin{proof}
We remark $z^n\in \RR$. 
Then, using Theorem \ref{thm:Calcut} 
the proof is completed. 
\end{proof}

\begin{cor}\label{cor:Calcut}
Let the notation be the same as above. 
Let $z_i=a+b\theta_d \in \ZZ+\ZZ\,\theta_d$. 
If 
\[
\displaystyle\frac{k\pi}{n}
=\sum_{j=1}^{l}m_j\arctan\frac{{\rm Im}\,(z_j)}{{\rm Re}\,(z_j)}
%=
%\left\{
%\begin{array}{ll}
%\displaystyle\sum_{j=1}^{l}m_j\arctan\frac{b_j}{a_j} 
%\mbox{\ \ if } K=\QQ(\sqrt{-1})\\
%\displaystyle=\sum_{j=1}^{l}m_j\arctan\frac{b_j}{a_j} 
%\mbox{\ \ if } K=\QQ(\sqrt{-3})\\
%\displaystyle\frac{k\pi}{n}=\sum_{j=1}^{l}m_j\arctan\frac{{\rm Re}\ z_j}{{\rm I%m}\ z_j} 
%\mbox{\ \ otherwise} 
%\end{array} 
%\right.
\]
holds, then 
\[
\frac{k\pi}{n}=\left\{
\begin{array}{ll}
j\pi/4 \mbox{\ \ if } K=\QQ(\sqrt{-1})\\
j\pi/6 \mbox{\ \ if } K=\QQ(\sqrt{-3})\\
j\pi/2 \mbox{\ \ otherwise} 
\end{array} 
\right.
\]
for some $j\in\ZZ$. 
\end{cor}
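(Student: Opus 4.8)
The statement to prove is Corollary \ref{cor:Calcut}, an Engel/St\o rmer--type identity: a $\mathbb{Q}$-linear combination (with integer coefficients $m_j$) of the argument angles $\arctan(\mathrm{Im}(z_j)/\mathrm{Re}(z_j))$ of elements $z_j\in\mathbb{Z}+\mathbb{Z}\,\theta_d$ that happens to be a rational multiple of $\pi$ must in fact land in the lattice of ``trivial'' angles allowed by Theorem \ref{thm:Calcut}: multiples of $\pi/4$ for $\mathbb{Q}(\sqrt{-1})$, of $\pi/6$ for $\mathbb{Q}(\sqrt{-3})$, and of $\pi/2$ otherwise. The whole point is that Theorem \ref{thm:Calcut} has already done the hard work: it characterizes exactly those $z$ for which some power $z^n$ is real. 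So the plan is to reduce the linear relation among arguments to a single statement about a single element of $\OOO_K$ having a real power, and then invoke Theorem \ref{thm:Calcut}.

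\textbf{Key steps.}
First I would rewrite each $\arctan(\mathrm{Im}(z_j)/\mathrm{Re}(z_j))$ as $\arg z_j$ (up to a harmless multiple of $\pi$, which is absorbed into the right-hand side since $\pi$ is itself a multiple of $\pi/4$, $\pi/6$, and $\pi/2$). The hypothesis then reads $\frac{k\pi}{n}\equiv \sum_{j=1}^{l} m_j\arg z_j \pmod{\pi}$, i.e.\ $\sum_j m_j\arg z_j$ is a rational multiple of $\pi$. Second, form the element $w:=\prod_{j=1}^{l} z_j^{\,m_j}\in K$ (using negative exponents where $m_j<0$; clearing denominators if one prefers to stay inside $\OOO_K$, multiply through by a suitable real rational to get an honest algebraic integer $w'$ with the same argument). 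Then $\arg w=\sum_j m_j\arg z_j$, so $\arg w$ is a rational multiple of $\pi$, say $\arg w = (k/n)\pi$; hence $w^{2n}$ is a nonnegative real, and after clearing denominators some power of a genuine element of $\OOO_K$ with the same argument is real. Third, apply Theorem \ref{thm:Calcut} to that element: its argument, hence $\arg w$, hence $\sum_j m_j\arg z_j = k\pi/n$, is a multiple of $\pi/4$, $\pi/6$, or $\pi/2$ according to the field. This is exactly the claimed trichotomy, so the proof closes. (Calcut's original Corollary \ref{cor:Calcut_Z^2_2} is the $K=\mathbb{Q}(\sqrt{-1})$ instance, and the argument here is the verbatim generalization.)

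\textbf{Main obstacle.}
The only genuinely delicate point is the passage from the product $w=\prod z_j^{m_j}$ --- which a priori lives in the fraction field $K$, not in $\OOO_K$, when some $m_j$ are negative --- back to a bona fide element of $\OOO_K$ to which Theorem \ref{thm:Calcut} applies. Since $\OOO_K$ is a PID (class number $1$), one writes $w = \alpha/\beta$ with $\alpha,\beta\in\OOO_K$, and $\beta\bar\beta=N(\beta)\in\mathbb{Z}_{>0}$ is real, so $w$ and $\alpha\bar\beta$ have the same argument; thus it suffices to run the argument with $z:=\alpha\bar\beta\in\OOO_K$, whose argument is still the rational multiple $k\pi/n$ of $\pi$ and therefore some power of which is real. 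Everything else is bookkeeping: tracking the ambiguity of $\arctan$ versus $\arg$ modulo $\pi$, and checking that $\pi$ itself is an integer multiple of each of $\pi/4,\pi/6,\pi/2$ so that these reductions cost nothing.
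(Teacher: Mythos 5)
Your proposal is correct and follows essentially the same route as the paper: rewrite each arctangent as $\arg z_j$, form $z=\prod_j z_j^{m_j}$, observe that a power of it is real, and invoke Theorem \ref{thm:Calcut}. Your extra step of replacing $w=\alpha/\beta$ by $\alpha\bar{\beta}\in\OOO_K$ when some $m_j<0$ (and tracking the $\arctan$ versus $\arg$ ambiguity modulo $\pi$) fills in details the paper's proof leaves implicit, but it is the same argument.
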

\begin{proof}
We have 
\begin{eqnarray*}
\frac{k\pi}{n}
&=&\sum_{j=1}^{l}m_j\arctan\frac{{\rm Im}\,(z_j)}{{\rm Re}\,(z_j)} \\ 
&=&\sum_{j=1}^{l}m_j\arg(a_j+b_j\theta_d) \\ 
&=&\arg\prod_{j=1}^{l}(a_j+b_j\theta_d)^{m_j} \pmod{2\pi}. 
\end{eqnarray*}
We set 
\[
z:=\prod_{j=1}^{l}(a_j+b_j\theta_d)^{m_j} . 
\]
We remark $z^n\in \RR$. 
Then, using Theorem \ref{thm:Calcut} 
the proof is completed. 
\end{proof}

%\section{Concluding remarks}\label{section:rem}

\bigskip
\noindent
{\bf Acknowledgment.}
The authors would like to thank S.\ V.\ Konyagin 
for the useful discussions. 
He has an interesting question that 
is there a circle satisfying the following condition that 
the $\ZZ^{2}$-lattice points on the circle make a $4$-design?
The second author is supported by JSPS research fellowship. 
The third author was supported by the Russian Foundation 
for Basic Research (project 08-01-00501). 

\end{document}